\newtheorem{theorem}{\bf Theorem}[section]
\newtheorem{lemma}[theorem]{\bf Lemma}
\newtheorem{obs}[theorem]{\bf Observation}
\newtheorem{problem}[theorem]{\bf Problem}
\newtheorem{prop}[theorem]{\bf Proposition}
\newtheorem{conj}[theorem]{\bf Conjecture}
\newtheorem{cor}[theorem]{\bf Corollary}
\newtheorem{const}[theorem]{\bf Construction}
\newtheorem{question}[theorem]{Question}
\theoremstyle{definition}
\newtheorem{remark}[theorem]{\bf Remark}
\newtheorem{defi}[theorem]{\bf Definition}
\newcommand{\cP}{\mathcal{P}}
\newcommand{\cA}{\mathcal{A}}
\newcommand{\cL}{\mathcal{L}}
\newcommand{\cB}{\mathcal{B}}
\newcommand{\cS}{\mathcal{S}}
\newcommand{\cK}{\mathcal{K}}
\newcommand{\cU}{\mathcal{U}}
\newcommand{\cH}{\mathcal{H}}
\newcommand{\PG}{\mathrm{PG}}
\newcommand{\GF}{\mathrm{GF}}
\newcommand{\pma}{\mathrm{pm}}
\newcommand{\Sp}{\mathrm{Sp}}
\newcommand{\F}{\mathbb{F}}
\title{Avoiding secants of given size in finite projective planes}
\author{Tamás Héger\thanks{Department of Computer Science and Information Theory, 
		Budapest University of Technology and Economics, M\H{u}egyetem rkp. 3., H-1111 Budapest, Hungary, and Department of Computer Science, ELTE E\"otv\"os Lor\'and University, H-1117 Budapest, P\'azm\'any P.\ stny.\ 1/C.
		E-mail: {\tt heger@cs.bme.hu, heger.tamas@ttk.elte.hu}}
	\and Zolt\'an L\'or\'ant Nagy\thanks{ELTE Linear Hypergraphs  Research Group,
		E\"otv\"os Lor\'and University, Budapest, Hungary. The author is supported by the Hungarian Research Grant (NKFIH) No. PD  134953. and K 124950.  	E-mail: {\tt nagyzoli@cs.elte.hu}}
}
\date{}
\begin{document}
	
	\maketitle
	
	\begin{abstract} Let $q$ be a prime power and $k$ be a natural number.
		What are the possible cardinalities of point sets  ${S}$
		in a  projective plane of order $q$,  which do not intersect any line at exactly $k$ points?
		This problem and its variants have been investigated before, in relation with blocking sets, untouchable sets or sets of even type, among others. In this paper we show a series of results which point out the existence of all or almost all possible values $m\in [0, q^2+q+1]$ for $|S|=m$, provided that $k$ is not close to the extremal values $0$ or $q+1$.  Moreover, using polynomial techniques we show the existence of a point set $S$ with the following property: for every prescribed list of numbers $t_1, \ldots t_{q^2+q+1}$, $|S\cap \ell_i|\neq t_i$ holds for the $i$th line $\ell_i$,  $\forall i \in \{1, 2, \ldots, q^2+q+1\}$.  
	\end{abstract}
	
	\section{Introduction}

	Let $\Pi=\Pi_q$ denote a projective plane of order $q$ with point set $\cP$ and line set $\cL$. A  set $S$ of points in $\Pi$ is said to be of \textit{type }$(a_1,\ldots,a_r)$ if the set $\{a_i \colon  i=1\ldots r \}$ collects the line intersection numbers of the set $S$, i.e., $\{a_i \colon  i=1\ldots r \}=\{|S\cap \ell| \colon \ell\in \cL\}$.
	
	The most studied point sets $S$ are those with only two intersection numbers. In desarguesian planes $\PG(2,q)$ over the $q$-element field $\F_q$, these sets often have some algebraic structure, and their size $|S|$ is often well defined by the properties; we may mention $(k,n)$-arcs, Baer subplanes or unitals as some notable examples. 

	Sets with few intersection numbers in finite projective planes have been investigated by several authors, we refer to the works of Calderbank \cite{Calder}, Durante \cite{Durante}, Hirschfeld and Szőnyi \cite{HirSzonyi}, Brouwer, Coykendall and Dover     \cite{Brouwer, Dover}, Hamilton and Penttila \cite{Hami-Pent}. We also mention that the analogous problem has been also investigated in higher dimensional spaces as well, see e.g., \cite{Durante, Inna, Zanetti}.
	
	However, not much seems to be known in the general case of  type sets with cardinality  more than three; see Hirschfeld \cite{Hirsch} for a survey of known results.
	
	Our aim is to initiate the investigation of  sets $S$, for which some fixed value $k\in \{0,1,\ldots, q+1\}$ is \textit{missing from the type} of $S$, and discuss whether such sets exist with every possible  size $|S|=m\in  \{0,1,\ldots, q^2+q+1\}$.
	
	\begin{defi}[Secants, $k$-avoiding property]
		Let $S$ be a set of points in a projective plane $\Pi_q$ of order $q$. A line $\ell$ is called \emph{skew}, \emph{tangent}, or \emph{$k$-secant} to $S$, if $|S\cap \ell|$ equals $0$, $1$, or $k$, respectively. $S$ is \emph{$k$-avoiding}, if $S$ has no $k$-secants.
	\end{defi}
	
	For specific values of $k$, this problem has been studied before. For $k=0$, point sets without skew lines are called \emph{blocking sets}, and these have been subject to extensive study since the middle of the last century; for a survey on blocking sets, see \cite{BlokhuisSzonyi} or \cite{KSz}. In the present context, the problem is to determine the possible sizes of blocking sets, which is well understood and quite trivial anyways: a blocking set of size $m$ exists in $\Pi_q$ if and only if $m\geq q+1$. This follows from the fact that the smallest blocking sets are lines, and that any superset of a blocking set is also a blocking set.
	
	The case $k=1$ requires sets avoiding tangent lines, and this case is already far from being trivial. Such sets were called \emph{sets without tangents} or \emph{untouchable sets}, see the papers of
	Blokhuis, Seress, Wilbrink \cite{Blokhuis}, and Blokhuis, Szőnyi, Weiner \cite{BlokhuisSzonyiW}. It is clear that such a set, unless empty, must have at least $q+2$ points (consider a point of the set and the $q+1$ lines passing through it). When $q$ is even, in $\PG(2,q)$ there exist sets without tangents of size $q+2$, called \emph{hyperovals}. Yet, as the superset of a $1$-avoding set is not necessarily $1$-avoiding, this observation does not close the case. When $q$ is odd, Blokhuis, Seress, Wilbrink showed via polynomial techniques that $1$-secants always exist for sets $S$ if $0<|S|\leq q+0.25\sqrt{2q}+1$ holds. 
	
	Another motivation comes from the Euclidean geometry. The celebrated Sylvester-Gallai theorem states that a finite non-collinear point set $S$ on the Euclidean plane always determines a 2-secant. In other words, if we take all the lines of the geometry and consider their intersection with $S$, the intersection size 2 is unavoidable (unless $S$ is a subset of a line). Embedding $2$-secant free, so-called Sylvester-Gallai designs has been studied even in the '50s by Motzkin \cite{Motzkin}, and later by Kelly and Nwankpa \cite{sylgall}.  
	
	Our problem can be generalized to higher dimensions as well, where one seeks point sets of given size which does not have subspace intersections of prescribed cardinality. The famous cap set problem \cite{Ellen} can be rephrased also in such a way, here we refer to the recent paper of Kovács and Nagy \cite{KN1, KN2} on the higher dimensional results and the detailed exposition of related topics, such as extremal graph theoretic analogues \cite{EFRS}.

	We formalize the problem to be discussed in the present paper as follows. 
	
	
	
	
	\begin{problem}\label{mainproblem}
		Let us fix a projective plane $\Pi_q$ of order $q$ and an integer $k\in[0,q+1]$.
		\begin{itemize}
			\item Given an integer $m\in [0,q^2+q+1]$, is it true that there exists a $k$-avoiding point set $S$ in $\Pi_q$ of size $m$?
			\item Determine the spectrum  $\Sp(k,\Pi_q)$, the  set of integers $m$ for which a $k$-avoiding point set of size $m$ exists in $\Pi_q$. If $\Pi_q$ is  desarguesian, we may use the notation $\Sp(k,q)$ instead of $\Sp(k,\PG(2,q))$. 
		\end{itemize}
	\end{problem}

	\begin{obs} 
		As the complement of a $k$-avoiding set $S$ is clearly $(q+1-k)$-avoiding, it follows that $\Sp(q+1-k,\Pi_q)= \{q^2+q+1-m\colon m\in \Sp(k,\Pi_q)\}$. 
	\end{obs}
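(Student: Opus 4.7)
The statement is essentially an immediate counting observation, so my plan is to turn the informal remark ``the complement of a $k$-avoiding set is $(q+1-k)$-avoiding'' into a precise bijective argument between the relevant spectra.

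First, I would recall the two basic combinatorial facts about a projective plane $\Pi_q$ of order $q$: the total number of points is $|\cP|=q^2+q+1$, and every line $\ell\in\cL$ satisfies $|\ell|=q+1$. From this, for any point set $S\subseteq \cP$ and any line $\ell$, the identity
\[
|\ell\cap S| + |\ell\cap (\cP\setminus S)| = q+1
\]
holds. In particular, $|\ell\cap(\cP\setminus S)|=k'$ if and only if $|\ell\cap S|=q+1-k'$.

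Next I would use this to translate the $k$-avoiding property under complementation. Suppose $S\in \Pi_q$ is $k$-avoiding, i.e.\ no line meets $S$ in exactly $k$ points. Then by the identity above, no line meets the complement $S^c:=\cP\setminus S$ in exactly $q+1-k$ points, so $S^c$ is $(q+1-k)$-avoiding. Moreover, $|S^c|=q^2+q+1-|S|$. Since the map $S\mapsto S^c$ is an involution on the power set of $\cP$, it provides a bijection between $k$-avoiding sets of size $m$ and $(q+1-k)$-avoiding sets of size $q^2+q+1-m$.

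Finally, reading off both directions of this bijection at the level of sizes yields the set equality
\[
\Sp(q+1-k,\Pi_q)=\{q^2+q+1-m\colon m\in \Sp(k,\Pi_q)\},
\]
which is the claim. There is no real obstacle here; the only care needed is to verify that the correspondence goes both ways, which is immediate from the involutive nature of complementation.
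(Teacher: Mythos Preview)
Your proposal is correct and follows exactly the same approach as the paper: the observation is stated without a separate proof, relying on the evident fact that complementation turns a $k$-secant into a $(q+1-k)$-secant. You have simply written out the details of this one-line remark carefully, including the involutive nature of complementation to get both inclusions.
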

	
	Consequently, we may suppose that $k\leq \frac{q+1}{2}$, and this will be assumed from now on.
	
	It appears that the bottleneck of the problem is to decide whether $m\in \Sp(k,\Pi_q)$ for those cardinalities $m$ where the average intersection size $\frac{q+1}{q^2+q+1}\cdot m$ is close to $k$. The result below, showing  $m\in \Sp(k,\Pi_q)$ for every $m$ for which $|\frac{q+1}{q^2+q+1}\cdot m - k|\geq 1$, follows from a simple construction.
	
	\begin{prop}[Critical window]\label{trivikonst}
		Let $\Pi_q$ be an arbitrary projective plane of order $q\geq 4$, and let $1\leq k\leq (q+1)/2$. Then we have the following:
		\begin{itemize}
			\item If $k\geq 3$, then $[0,(k-1)q+1]\cup [(k+1)q,q^2+q+1]\subset\Sp(k,\Pi_q)$. 
			\item If $k=1$, then $(\{2q\}\cup [2q+2,q^2+q+1])\subset\Sp(1,\Pi_q)$, and $[0,q+1]\cap\Sp(1,\Pi_q)=\emptyset$.
			\item If $k=2$, then $([0,q+1]\setminus\{2\})\cup [3q,q^2+q+1]\subset\Sp(2,\Pi_q)$, and $2\notin \Sp(2,\Pi_q)$.
		\end{itemize}
	\end{prop}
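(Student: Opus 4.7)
The plan is to exhibit, for each $m$ in the claimed ranges, an explicit $k$-avoiding set of that size, mostly built from pencils of lines through a fixed point with small local modifications. The three bullets split into a lower-range problem (sizes up to $(k-1)q+1$, or its analogue for $k = 1, 2$) and an upper-range problem $[(k+1)q, q^2+q+1]$; the upper range reduces to the lower one via complementation (the preceding Observation, applied with $k' := q+1-k$). Thus the core task is to realise every required size in $[0, (k-1)q+1]$ for $k \geq 3$, together with the two special cases.

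For $k \geq 3$, fix a point $P$ and let $L_j$ denote the union of $j$ distinct lines through $P$; then $|L_j| = jq+1$ and its line-spectrum is $\{1, j, q+1\}$. For $m \in [0, q+1] \setminus \{k\}$, take $m$ collinear points (spectrum $\{0, 1, m\}$); for $m = k$, take $k-1$ collinear points plus one point off that line (spectrum $\{0, 1, 2, k-1\}$). For $m \in [q+2, (k-1)q+1]$, decompose $m = jq + 1 + t$ with $j \in [1, k-2]$, $t \in [0, q]$. In the typical case $t \neq k-1$, take $L_j$ together with $t$ points of $\ell' \setminus \{P\}$ for some further line $\ell'$ through $P$; the spectrum $\{1, j, j+1, t+1, q+1\}$ avoids $k$. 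For the exceptional sizes $m = jq + k$ with $j \in [1, k-2]$ (arising from $t = k-1$), take instead $L_{j+1}$ and delete $q-k$ points from $\ell_1 \setminus \{P\}$ together with $1$ point from $\ell_2 \setminus \{P\}$, producing spectrum $\{1, k+1, q, q+1, j-1, j, j+1\}$, which avoids $k$ because $j \leq k-2$ and $k \leq (q+1)/2 < q$. The upper range $[(k+1)q, q^2+q+1]$ follows by complementing the lower range of $k' = q+1-k \geq 3$.

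For $k = 2$, the lower range $[0, q+1] \setminus \{2\}$ is immediate from $m$ collinear points, and $m = 2 \notin \Sp(2, \Pi_q)$ since any two points determine a $2$-secant; the upper range $[3q, q^2+q+1]$ is the complement of the lower range for $k' = q - 1 \geq 3$ (valid since $q \geq 4$). For $k = 1$, a pigeonhole argument on the $q+1$ lines through any point of $S$ shows $[1, q+1] \cap \Sp(1, \Pi_q) = \emptyset$. For $m = 2q$ take $(\ell_1 \cup \ell_2) \setminus \{P\}$, spectrum $\{0, 2, q\}$. For $m \in [2q+2, q^2+q+1]$, write $m = jq + r$ with $j \in [2, q+1]$ and $r \in [0, q]$: if $r = 0$ take $L_j \setminus \{P\}$; if $r \geq 2$ augment it by $r$ points on a further line through $P$; if $r = 1$ perform the surgery $(L_j \setminus \{P, R\}) \cup \{Q_1, Q_2\}$ with $R \in \ell_1 \setminus \{P\}$ and $Q_1, Q_2$ on another line through $P$ (spectrum $\{0, 2, q-1, q, j-1, j, j+1\}$, which is $1$-avoiding for $j \geq 3$). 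The surgery requires $j \geq 3$; the only uncovered case is $j = 2, r = 1$, giving $m = 2q + 1$ — precisely the size omitted by the statement.

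The main delicate point is the exceptional sizes $jq + k$ (for $k \geq 3$) and $jq + 1$ (for $k = 1$), where the straightforward pencil candidate contains exactly a $k$-secant. In both cases the remedy is to spread a controlled set of deletions across two lines of the pencil; verifying that no forbidden $k$-secant survives uses the inequality $q - k \geq 1$ furnished by the hypotheses $q \geq 4$ and $k \leq (q+1)/2$.
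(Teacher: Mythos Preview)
Your argument is correct and rests on the same underlying idea as the paper's proof: pencils of lines through a fixed point, perturbed by partial lines. The organisation, however, differs. The paper works with a single construction that already carries \emph{two} partial lines (subsets $X_1,X_2$ of two lines through $P$, with $P$ removed), giving intersection sizes $\{0,t_1,t_2,s-2,s-1,s,q\}$; with two free parameters $t_1,t_2$ one can always avoid the value $k$, so no separate ``surgery'' step is needed, and both the lower range $[0,(k-1)q]$ and the upper range $[(k+1)q,q^2+q]$ are obtained directly by letting the number of full lines be $s\le k-1$ or $s\ge k+3$. You instead use a single partial line, handle the exceptional sizes $jq+k$ by a two-line deletion, and obtain the upper range by complementation from the lower range for $k'=q+1-k$. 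Both routes work; the paper's two-partial-line device trades the complementation step for a slightly richer base construction.

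Two small points to tighten. First, your lower-range argument must be applied to $k'=q+1-k$, which for $3\le k\le (q+1)/2$ satisfies $(q+1)/2\le k'\le q-2$; so in the surgery step the justification ``$k\le(q+1)/2<q$'' should read ``$k\le q-1$'' (which is what you actually need, and which holds for all the $k'$ you invoke). Second, for $k=1$ your decomposition $m=jq+r$ with the $r=1$ surgery needs an extra line through $P$, unavailable when $j=q+1$; the size $m=q^2+q+1$ is then not produced by the construction, but of course the full plane handles it.
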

	
	In light of the above proposition, we refer to the interval $[(k-1)q+2, (k+1)q-1]$ as the \textit{critical window}. Exhaustive search suggests that for small values of $q$, several elements of the critical window are missing from the spectrum, provided that $k$ is rather small; see Table \ref{tab:smallq} in Section \ref{sec3}  for details. The case $k=1$ is also discussed in detail in Section \ref{UTS}.

	Our main results concerning Problem \ref{mainproblem} are as follows.
	
	\begin{theorem}\label{maxiv} Let $q>2$ be even.  Then 
		$[kq+4, (k+1)q+1]\subset \Sp(k,q)$ hence $|Sp(k,q)|\geq q^2-3$,  provided that $3\leq k\leq q/4+3$ holds.
	\end{theorem}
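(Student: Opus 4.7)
The plan is to construct, for each integer $m\in[kq+4,(k+1)q+1]$, an explicit $k$-avoiding point set of size $m$ in $\PG(2,q)$. For the two largest sizes, I take $L_{k+1}$, the union of $k+1$ concurrent lines through a common point $P$, which realizes $m=(k+1)q+1$ and has intersection multiset $\{1,k+1,q+1\}$ avoiding $k$ since $k\neq 1,k+1,q+1$; deleting $P$ gives $L_{k+1}\setminus\{P\}$ of size $(k+1)q$ with intersections $\{0,k+1,q\}$, which avoids $k$ via $k\le q/4+3<q$.

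For the remaining sizes I exploit that $q$ is an even prime power to invoke Denniston's theorem: there exists a maximal $(3q+4,4)$-arc $M\subset\PG(2,q)$ meeting every line in $0$ or $4$ points. Each point $P'\notin M$ lies on exactly $q/4$ external lines of $M$, so the hypothesis $k-3\le q/4$ allows choosing $k-3$ external lines $\ell'_1,\ldots,\ell'_{k-3}$ through a common point $P'\notin M$. With
\[S_0 := M \cup \ell'_1\cup\cdots\cup\ell'_{k-3}\setminus\{P'\}\]
(and $S_0:=M$ when $k=3$), a routine count yields $|S_0|=kq+4$, and its line-intersection multiset sits inside $\{0,4,q,k-3,k+1\}$, avoiding $k$ provided $k\ne 4$. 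For intermediate sizes $m=kq+4+j$ with $j\in\{1,\dots,q-5\}$, I adjoin an arc (cap) $X_j\subset\PG(2,q)\setminus S_0$ of cardinality $j$. Since $|X_j\cap\ell|\le 2$ for any arc, the only potential $k$-secants arise from lines with $|S_0\cap\ell|\in\{k-2,k-1\}$, and a counting argument using the abundance of positions in $\PG(2,q)\setminus S_0$ ensures that $X_j$ can be chosen avoiding those.

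The case $k=4$ requires separate attention since a maximal $4$-arc inherently has $4$-secants. Here I use a parity argument: as $q$ is even and $k+1=5$ is odd, $L_5$ has only odd-sized line intersections; since hyperovals intersect every line in an even number of points, symmetric differences $L_5\triangle H$ preserve the odd parity and hence automatically avoid $4$-secants. By combining $L_5$, $L_5\setminus\{P\}$, and successive symmetric differences with hyperovals chosen through or outside $P$, every target size in $[4q+4,5q+1]$ can be realized.

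The main technical obstacle is twofold: first, showing that the arc $X_j$ of each prescribed cardinality can be chosen disjoint from $S_0$ and avoiding the small collection of "dangerous" lines where $|S_0\cap\ell|\in\{k-2,k-1\}$; and second, treating the case $k=4$ in detail via the parity/hyperoval approach, where one must carefully track how each hyperoval XOR changes the size and verify that every integer in $[4q+4,5q+1]$ is reached. The hypothesis $k\le q/4+3$ enters both in producing the $k-3$ external lines through $P'$ and in the counting argument that supplies the arcs.
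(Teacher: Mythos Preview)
Your base object $S_0$ (the maximal $(3q+4,4)$-arc together with $k-3$ external lines through a common outside point $P'$) is exactly the starting point the paper uses, and your treatment of the two largest sizes via $L_{k+1}$ and $L_{k+1}\setminus\{P\}$ is fine. The divergence, and the gap, is in how you grow $S_0$ to reach the intermediate sizes $kq+4+j$.

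The paper does \emph{not} use arcs. It adds a collinear set $X$ of $t$ points on one further line $\ell'$ through the same point $P'$, where $\ell'$ is either another external line of $M$ (Option~1) or a $4$-secant of $M$ (Option~2). Because $\ell'$ also passes through $P'$, every line $\neq\ell'$ gains at most one point from $X$, so the intersection multiset stays inside $\{0,4,q,t\ \text{or}\ t+4,\ k-3,k-2,k+1,k+2\}$, and the only obstruction is the single line $\ell'$ becoming a $k$-secant; switching between the two options always fixes this. Your arc approach loses this control. Concretely, a ``counting argument'' does not produce arcs of size up to $q-5$ inside $\PG(2,q)\setminus S_0$: greedy only yields arcs of order $\sqrt{q}$, and you give no construction of a large arc (say a conic or hyperoval) disjoint from $S_0$. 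Worse, for $k=5$ the dangerous lines are the $3q/4+1$ four-secants of $M$ through $P'$; for $q=8$ these cover \emph{every} point outside $S_0$, so even $|X_1|=1$ already fails. The same obstruction appears for $k=6$ with $j=2$. So the intermediate step has a genuine hole, not just a missing detail.

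Your separate plan for $k=4$ also breaks down. The parity observation is correct --- $L_5\triangle H$ has only odd secants --- but you cannot reach the required sizes this way: $|L_5\triangle H|=6q+3-2|L_5\cap H|$ and $|L_5\cap H|\le 10$, so for $q\ge 32$ every such symmetric difference has more than $5q+1$ points, already outside $[4q+4,5q+1]$. Iterating with further hyperovals does not give size control either. The paper instead takes the $r=1$ version of the same construction and adds $P'$ back, obtaining a $4$-avoiding set of size $4q+5+t$ for all $0\le t\le q-4$; this is both simpler and actually works. Replace your arcs by a collinear set on one more line through $P'$ (with the two-option trick), and handle $k=4$ by reinserting $P'$.
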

	
	If $q$ is a large enough square number, there are $m$-sets where $k$-secants are avoidable  for every  $m$, provided that $k$ is not too small.
	
	\begin{theorem}\label{baeres}
		Let $q\geq 25$ be a square prime power, $2\sqrt{q}+2\leq k \leq q-2\sqrt{q}-1$. Then there exists an $m$-set of points avoiding $k$-secants for all $m$ with $0\leq m\leq q^2+q+1$.
	\end{theorem}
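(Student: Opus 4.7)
By Proposition~\ref{trivikonst}, $\Sp(k,q)\supseteq [0,(k-1)q+1]\cup[(k+1)q,q^2+q+1]$, so I must cover the \emph{critical window} $W=[(k-1)q+2,(k+1)q-1]$. The hypothesis $2\sqrt q+2\le k\le q-2\sqrt q-1$ is invariant under $k\mapsto q+1-k$, so by Observation~1.3 (the complement of a $k$-avoiding set is $(q+1-k)$-avoiding) it suffices to realize the lower half $W^-=[(k-1)q+2,kq]$ and then complement for the upper half.

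Since $q$ is a square prime power, $\PG(2,q)$ contains Baer subplanes. A Baer subplane $B$ has $q+\sqrt q+1$ points and meets every line in either $1$ or $\sqrt q+1$ points; the numerical hypothesis places $k$ and $k\pm 1$ at spectral distance at least $\sqrt q-1$ from the Baer intersection values and their small shifts, providing a safe buffer for local modifications. The base construction: fix a Baer subplane $B$ and a point $P\in B$; among the $q+1$ lines through $P$, exactly $q-\sqrt q$ are tangent to $B$ (i.e.\ meet $B$ only at $P$). For $0\le j\le k-2$ with $j\ne k-\sqrt q-1$, pick $j$ such tangent lines $\ell_1,\ldots,\ell_j$ and set $S_j=B\cup\ell_1\cup\cdots\cup\ell_j$. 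A case analysis on whether a generic line $\ell$ passes through $P$ and on whether $|\ell\cap B|=1$ or $\sqrt q+1$ yields $|\ell\cap S_j|\in\{1,\sqrt q+1,j+1,j+\sqrt q+1,q+1\}$, none equal to $k$; hence $S_j$ is $k$-avoiding of size $(j+1)q+\sqrt q+1$. In particular $j=k-2$ produces a $k$-avoiding set of size $(k-1)q+\sqrt q+1\in W^-$.

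To reach every $m\in W^-$, I will apply elementary moves to the family $\{S_j\}$: (i)~remove or add individual points of $B$ (changing $|S|$ by $\pm 1$ and shifting the intersection numbers of lines through the modified point by $\pm 1$); (ii)~change $j$ by $\pm 1$; (iii)~union with a second Baer subplane $B'$ (disjoint from $S_j$ on the relevant configuration), which shifts $|S|$ by $|B'\setminus S_j|$ and whose interaction with the existing intersection spectrum is controlled by the buffer hypothesis. Each move is safe provided no line currently has intersection $k\mp 1$ at the modified point; the buffer of width $\ge\sqrt q-1$ guarantees that at each stage a safe move exists.

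\textbf{Main obstacle.} The technical heart is to verify that the greedy procedure above never deadlocks at a size short of the target $m\in W^-$. This reduces to bounding, for each intermediate configuration, the number of $(k-1)$- and $(k+1)$-secants, and showing that the set of points lying on none of them is non-empty. A standard double count places the number of ``bad'' candidate points at $O(q^{3/2})$ while the pool of admissible points remains $\Omega(q^2)$, and the hypothesis $q\ge 25$ (equivalently $\sqrt q\ge 5$) supplies the numerical margin that makes this counting decisive.
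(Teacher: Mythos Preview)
Your base construction $S_j=B\cup\ell_1\cup\cdots\cup\ell_j$ is correct and gives $k$-avoiding sets, but only of the sizes $(j+1)q+\sqrt q+1$, which are spaced $q$ apart. The whole argument therefore rests on your ``elementary moves'' and ``greedy procedure'' to interpolate, and that part is not a proof. You yourself label it the ``main obstacle'' and then dispatch it with an unproved asymptotic claim (``a standard double count places the number of bad candidate points at $O(q^{3/2})$''). But you never specify the intermediate configurations precisely enough to run any count: once move~(i) removes even a single point of $B$, the clean intersection spectrum $\{1,\sqrt q+1,j+1,j+\sqrt q+1,q+1\}$ is lost, and after several moves you maintain no invariant that would control the number of $(k\pm1)$-secants. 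The sentence ``the buffer of width $\ge\sqrt q-1$ guarantees that at each stage a safe move exists'' is an assertion, not an argument; nothing prevents, say, every point outside the current set from lying on some $(k-1)$-secant once the configuration has been perturbed.

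The paper's route is entirely explicit and avoids any iterative or greedy reasoning. Rather than one Baer subplane, it takes the union $\mathcal{A}$ of $s$ pairwise \emph{disjoint} Baer subplanes (a set of type $(s,s+\sqrt q)$), then adds $r$ full $s$-secant lines through an external point $P$ together with a partial line of $t$ further points. Every line meets the resulting set in at most $s+r+1$ or at least $s+\sqrt q$ points (the partial line itself is handled by a second option), so the set is $k$-avoiding whenever $s+r+2\le k\le s+\sqrt q-1$. A direct interval-covering check over the three parameters $(s,r,t)$ then shows that the achievable sizes fill $[(k-1)q+2,(k-1)q+(k-2)\sqrt q]$, and $k\ge 2\sqrt q+2$ gives $(k-2)\sqrt q\ge 2q$, so the entire critical window is covered. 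Your move~(iii) gestures in the right direction---bring in more Baer subplanes---but the decisive idea is to use the \emph{number} $s$ of subplanes as a coarse parameter in a deterministic three-parameter family, which is exactly what replaces your undefined greedy process.
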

	
	A natural extension of Problem \ref{mainproblem} is to prescribe values for each line $\ell\in \cL(\Pi_q)$ which are forbidden secant sizes and ask for the existence of projective  point sets which satisfy each requirement. 
	Let us introduce the following notations.
	\begin{defi}
		Let $f: \mathcal{L}\to \mathbb{N}$ be a function which assigns a non-negative integer to every line $\ell\in \mathcal{L}$ of a projective plane $\Pi_q$ of order $q$. Here we do not require that the plane $\Pi_q$ is desarguesian.  An \textit{$f$-avoiding set $S$   in $\Pi_q$} is a point set  where for  each line $\ell\in \mathcal{L}$, $|S\cap \ell| \neq f(\ell)$ holds.    
	\end{defi}
	
	For \textit{$f$-avoiding set $S$   in $\Pi_q$}, we have the following theorem.
	\begin{theorem}\label{nullst}
		There exists an $f$-avoiding set in  every projective plane $\Pi_q$  for every  function $f$.
	\end{theorem}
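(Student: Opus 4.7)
The plan is to apply Alon's Combinatorial Nullstellensatz to a polynomial that encodes the forbidden intersection sizes. Label the $n = q^2+q+1$ points as $P_1, \ldots, P_n$ and introduce an indicator variable $x_i \in \{0,1\}$ for each, thinking of $(x_1, \ldots, x_n)$ as the characteristic vector of a candidate set $S$. For every line $\ell \in \mathcal{L}$ write $L_\ell(x) = \sum_{P_i \in \ell} x_i$, so that $L_\ell(x) = |S \cap \ell|$. Set
$$P(x_1, \ldots, x_n) = \prod_{\ell \in \mathcal{L}} \bigl( L_\ell(x) - f(\ell) \bigr) \in \mathbb{Q}[x_1, \ldots, x_n].$$
If $P$ takes a nonzero value at some $(s_1, \ldots, s_n) \in \{0,1\}^n$, then the corresponding set $S = \{P_i : s_i = 1\}$ is $f$-avoiding, so it suffices to show $P$ does not vanish identically on $\{0,1\}^n$.

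Since $\deg P = n$ and each variable ranges over a $2$-element set, the Combinatorial Nullstellensatz reduces the problem to verifying that the coefficient of the multilinear top-degree monomial $x_1 x_2 \cdots x_n$ in $P$ is nonzero in $\mathbb{Q}$. To identify this coefficient, expand the product: to obtain $x_1 \cdots x_n$ one must choose from each factor $L_\ell(x) - f(\ell)$ one of the variable terms $x_i$ with $P_i \in \ell$ (never the constant $-f(\ell)$), and the $n$ chosen variables must collectively be exactly $\{x_1, \ldots, x_n\}$. Each such choice is precisely a perfect matching of lines to points in the incidence graph of $\Pi_q$ (where line $\ell$ is matched to a point incident to it), and every such matching contributes $+1$.

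The incidence graph of $\Pi_q$ is a $(q+1)$-regular bipartite graph with both parts of size $n$; any regular bipartite graph satisfies Hall's condition by a trivial double-counting argument and therefore admits a perfect matching. Consequently, the coefficient of $x_1 \cdots x_n$ in $P$ is a positive integer, hence nonzero, and the Combinatorial Nullstellensatz yields $(s_1, \ldots, s_n) \in \{0,1\}^n$ with $P(s) \neq 0$. The resulting set $S$ satisfies $|S \cap \ell| \neq f(\ell)$ for every $\ell \in \mathcal{L}$.

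The only potentially delicate step is recognising that the setup fits the Nullstellensatz so tightly: the total degree of $P$ equals the number of variables exactly, because $|\mathcal{L}| = |\mathcal{P}|$ in a projective plane. This forces us to analyse the full multilinear monomial, which in turn is what makes the combinatorics boil down to the classical fact that a symmetric $2$-design has a point--block system of distinct representatives. Once this alignment is noticed, no further calculation is needed.
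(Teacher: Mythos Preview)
Your proof is correct and follows essentially the same approach as the paper: both set up the polynomial $\prod_{\ell}\bigl(\sum_{P_i\in\ell} x_i - f(\ell)\bigr)$, observe that the coefficient of $x_1\cdots x_{q^2+q+1}$ counts perfect matchings in the (regular bipartite) incidence graph, invoke Hall's theorem to guarantee this count is positive, and then apply the Combinatorial Nullstellensatz over $\{0,1\}^{q^2+q+1}$. The only cosmetic difference is that the paper works over $\mathbb{R}$ while you work over $\mathbb{Q}$, which is immaterial.
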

	
	The paper is organised as follows. 
	In Section \ref{sec2} we prove Theorem \ref{nullst} using Alon's Combinatorial Nullstellensatz, moreover we show its modular extension and discuss the question of sharpness in terms of the number of excluded intersection numbers.
	Next we show that values outside the critical value can always be admitted by a suitable $k$-avoiding point set. Furthermore we study the case when $k$ is rather small, classify and extend previously examined construction schemes and present numerical results. These support the conjecture that while several values are missing from the support within the critical window, but only if $k$ is very small (provided that $k\leq \frac{q+1}{2}$). \\
	In Section \ref{sec:4}, we show a series of constructions to prove Theorem \ref{maxiv} and \ref{baeres} and some further results. These are usually obtained from group theory results and some set operations, using point sets such a type which does not contain line intersection numbers close to $k$. Prominent examples for such sets are lines, Baer subplanes, maximal arcs and unitals. Note however that one might assume further conditions (e.g., the order $q$ being a square) for the existence of such objects.
	Finally, we end our paper by pointing out some connection points in coding theory and formulate several conjectures.

	\section{Avoiding prescribed secant sizes for each line}\label{sec2}
	
	Recall that  $f: \mathcal{L}\to \mathbb{N}$ is a function which assigns a non-negative integer to every line $\ell\in \mathcal{L}$ of a projective plane $\Pi_q$ of order $q$.   An \textit{$f$-avoiding set $S$   in the projective plane} is such that for each line $\ell\in \mathcal{L}$, $|S\cap \ell| \neq f(\ell)$.
	
	Here we prove Theorem \ref{nullst} which claims the existence of an $f$-avoiding set in $\Pi_q$  for every  function $f$.
	For the proof of this result we apply the so-called Non-vanishing lemma, which is the consequence of Alon's Combinatorial Nullstellensatz.
	
	\begin{lemma}[Combinatorial Nullstellensatz, Non-vanishing lemma \cite{Alon}]\label{nonvanish}  Let $\mathbb{F}$ be an arbitrary field and let $P=P(x_1, \ldots, x_k)$ be a polynomial of $k$ variables over  $\mathbb{F}$. Suppose that there exists a monomial $\prod_{i=1}^k {x_i^{d_i}}$, such that the sum  $\sum_{i=1}^k {{d_i}}$ equals the total degree of $P$, and the coefficient of $\prod_{i=1}^k {x_i^{d_i}}$ in $P$ is nonzero. Then for any set of subsets $A_1, \ldots, A_k $ of $ \mathbb{F}$ such that $ |A_i| >d_i$, there exists a $k$-tuple $(s_1, s_2, \ldots, s_k)\in \bigtimes_{i=1}^k A_i$ for which $P(s_1, s_2, \ldots, s_k)\neq 0$.
	\end{lemma}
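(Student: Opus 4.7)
The plan is to argue by contradiction: suppose $P$ vanishes on the grid $A_1 \times \cdots \times A_k$. I would then reduce $P$ to a polynomial $\tilde{P}$ of bounded individual degrees that still vanishes there, and derive a contradiction from the fact that the prescribed top-degree monomial survives the reduction.

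The reduction step goes as follows. For each $i$, define the monic polynomial $g_i(x_i) := \prod_{a \in A_i}(x_i - a)$, which has degree $|A_i|$ and vanishes on $A_i$. Whenever a monomial of $P$ contains $x_i^{e_i}$ with $e_i \geq |A_i|$, I replace $x_i^{e_i}$ by $x_i^{e_i - |A_i|}\bigl(x_i^{|A_i|} - g_i(x_i)\bigr)$, a polynomial in $x_i$ of degree strictly less than $e_i$. Iterating this rewriting variable by variable, after finitely many steps one obtains a polynomial $\tilde{P}$ with $\deg_{x_i} \tilde{P} < |A_i|$ for every $i$. Since each replacement modifies $P$ only by a multiple of some $g_i$, the value of $\tilde{P}$ agrees with that of $P$ on every point of $A_1 \times \cdots \times A_k$, so $\tilde{P}$ also vanishes on the grid.

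The crucial observation is that the coefficient of $\prod_i x_i^{d_i}$ in $\tilde{P}$ equals that in $P$, and hence is nonzero. Any monomial of $P$ whose form is changed by the rewriting must contain some $x_j^{e_j}$ with $e_j \geq |A_j| > d_j$; for such a monomial to contribute to the $\prod_i x_i^{d_i}$ term after rewriting, one would need $e_i = d_i$ for every $i \neq j$, but then its total degree would be at least $e_j + \sum_{i \neq j} d_i \geq |A_j| + \sum_{i \neq j} d_i > \sum_i d_i = \deg P$, which is impossible. This is the only place where the hypothesis $\sum_i d_i = \deg P$ enters, and it is the most delicate step of the proof.

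It remains to establish the following grid-vanishing lemma: any $Q \in \mathbb{F}[x_1, \ldots, x_k]$ with $\deg_{x_i} Q < |A_i|$ for every $i$ that vanishes on $A_1 \times \cdots \times A_k$ is identically zero. I would prove this by induction on $k$. The base case $k=1$ is the familiar fact that a univariate polynomial with more roots than its degree vanishes identically. For the inductive step, write $Q = \sum_{j < |A_k|} Q_j(x_1, \ldots, x_{k-1})\, x_k^j$ and, for each fixed $(s_1, \ldots, s_{k-1}) \in A_1 \times \cdots \times A_{k-1}$, apply the one-variable case in $x_k$ to conclude that $Q_j(s_1, \ldots, s_{k-1}) = 0$ for every $j$; the induction hypothesis then forces each $Q_j \equiv 0$. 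Applied to $\tilde{P}$ this gives $\tilde{P} \equiv 0$, contradicting the surviving nonzero coefficient identified above and completing the proof.
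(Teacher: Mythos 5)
The paper does not prove this lemma at all: it is quoted verbatim from Alon's paper \cite{Alon} as a known black box (the ``Non-vanishing lemma'' form of the Combinatorial Nullstellensatz), so there is no in-paper argument to compare yours against. Your proposal is the standard proof of that theorem --- reduce modulo the polynomials $g_i(x_i)=\prod_{a\in A_i}(x_i-a)$ to a representative $\tilde P$ with $\deg_{x_i}\tilde P<|A_i|$ that agrees with $P$ on the grid, check that the extremal coefficient survives the reduction, and invoke the multidimensional interpolation fact that a polynomial with individual degrees below $|A_i|$ vanishing on $A_1\times\cdots\times A_k$ is identically zero --- and it is correct. Two cosmetic points: in the coefficient-preservation step you assert that a contributing monomial must satisfy $e_i=d_i$ for $i\neq j$, whereas the rewriting only forces $e_i\geq d_i$; your displayed inequality already uses the correct direction, so nothing breaks. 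Also, since the rewriting is iterated, one should note that every intermediate monomial still has total degree at most $\deg P$ and that exponents only ever decrease, so no later step can create the monomial $\prod_i x_i^{d_i}$ either; this is implicit in your argument and easily made explicit.
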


	\begin{proof}[Proof of Theorem \ref{nullst}] Let us assign a variable $x_i$ to each point $P_i$ of the plane $\Pi_q$. For each line $\ell$ we assign a linear term $(\sum_{P_i\in \ell} x_i-f(\ell))$.
		
		Consider the polynomial $$Q(x_1,\ldots, x_{q^2+q+1})=\prod_{\ell\in \mathcal{L}} (\sum_{P_i\in \ell} x_i-f(\ell))$$  over $\mathbb{R}$. We clearly have $\deg(Q)=q^2+q+1$. Moreover, $\prod_{i=1}^{q^2+q+1} x_i$ is a monomial of degree equal to $\deg(Q)$ with nonzero coefficient. Indeed, the coefficient of the 
		monomial in view equals the number of ways one can assign an incident point to every line such that the assigned points are pairwise distinct, i.e., the number of perfect matchings in the incidence graph of the projective plane. Since the incidence graph is regular, the number of perfect matchings is positive in view of Hall's Marriage Theorem. Now let us choose $A_i=\{0,1\}$ for all $i$. Then a non-vanishing element in $\bigtimes_{i=1}^{q^2+q+1} A_i$ is a characteristic vector of an $f$-avoiding set.
	\end{proof}

	We can also consider the modular version of Theorem \ref{nullst}. Suppose that a prime $p$ does not divide the permanent of the adjacency matrix of the Levi graph of $\Pi_q$, which  equals the number of perfect matchings $\pma(\Pi_q)$ in the incidence graph of $\Pi_q$. Then  we have the following.
	
	\begin{theorem}\label{nullst2}
		For every prime $p$ for which $p  \nmid \pma(\Pi_q),$ there exists  a set $S$  such that for each line $\ell$, $|S\cap \ell| \neq f(\ell) \pmod p$
	\end{theorem}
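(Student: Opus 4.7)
The plan is to repeat the proof of Theorem \ref{nullst} almost verbatim, but over $\mathbb{F}_p$ instead of $\mathbb{R}$. Concretely, I would still attach a variable $x_i$ to each point $P_i$ of $\Pi_q$, associate to each line $\ell$ the linear form $L_\ell=\sum_{P_i\in\ell}x_i-f(\ell)\in\mathbb{F}_p[x_1,\dots,x_{q^2+q+1}]$, and set $Q=\prod_{\ell\in\mathcal{L}}L_\ell$. For any $\{0,1\}$-substitution corresponding to a set $S\subset\mathcal{P}$, we have $L_\ell(s)=|S\cap\ell|-f(\ell)$ in $\mathbb{F}_p$, so $L_\ell(s)\neq 0$ in $\mathbb{F}_p$ is exactly the statement $|S\cap\ell|\not\equiv f(\ell)\pmod p$. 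Hence it suffices to produce a $\{0,1\}$-point at which $Q$ does not vanish over $\mathbb{F}_p$.

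To invoke the Non-vanishing Lemma (Lemma \ref{nonvanish}) with $A_i=\{0,1\}\subset\mathbb{F}_p$, I need a monomial of maximal degree $q^2+q+1$ whose coefficient in $Q$ is nonzero in $\mathbb{F}_p$. The natural candidate, as in Theorem \ref{nullst}, is $\prod_{i=1}^{q^2+q+1}x_i$. Expanding $Q$, the coefficient of this monomial counts the number of systems of distinct representatives where each line is assigned an incident point and no two lines share an assigned point; equivalently, it is the permanent of the bipartite incidence (Levi) matrix, i.e.\ $\pma(\Pi_q)$. This integer coefficient survives reduction modulo $p$ precisely under the hypothesis $p\nmid\pma(\Pi_q)$.

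With the leading coefficient nonzero in $\mathbb{F}_p$, Lemma \ref{nonvanish} yields a $\{0,1\}$-assignment on which $Q$ does not vanish, and its support is the desired set $S$. I do not foresee a real obstacle here; the only point deserving care is to check that reducing the integer permanent modulo $p$ is legitimate (which it is, because $Q$ itself has integer coefficients, and the polynomial identity that expresses the coefficient of $\prod x_i$ as $\pma(\Pi_q)$ is combinatorial and therefore holds over $\mathbb{Z}$ before reduction). Everything else is a direct reinterpretation of the nullstellensatz argument of Theorem \ref{nullst} over the prime field $\mathbb{F}_p$.
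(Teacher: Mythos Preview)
Your proposal is correct and matches the paper's own proof essentially verbatim: the paper also defines the same polynomial over $\mathbb{F}_p$, notes that the coefficient of $\prod_i x_i$ equals $\pma(\Pi_q)$ (hence is nonzero in $\mathbb{F}_p$ by the hypothesis $p\nmid\pma(\Pi_q)$), and applies Lemma~\ref{nonvanish} with $A_i=\{0,1\}$.
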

	\begin{proof}
		The proof follows the proof of Theorem \ref{nullst} after defining the same polynomial but over a finite field $\mathbb{F}_p$, and observing that the required nonzero coefficient  of $\prod_{i=1}^{q^2+q+1} x_i$ is provided by the property $p  \nmid\pma(\Pi_q)$.
	\end{proof}
	
	Unfortunately, neither any general formula, nor general divisibility properties are known for $\pma(\Pi_q)$. We refer to 
	\cite{perma1} concerning this problem. 
	
	It is natural to consider the following generalisation.
	
	\begin{problem}
		Suppose that on each line $\ell\in \mathcal{L}$, a set of forbidden values is given by a function $f^*:\mathcal{L}\to 2^{\mathbb{N}}$. What is the minimum of the sum of the cardinalities of the forbidden values  for the set of all lines $\sum_{\ell\in \mathcal{L}}|f^*(\ell)|$ for which there is no $f^*$-avoiding set in $\Pi_q$, provided that $|f^*(\ell)|\leq q$ for all $\ell$?
	\end{problem}
	The condition $|f^*(\ell)|\leq q$ simply ensures that at least two admissible values remain on each line. Having only one admissible value for a pair of lines would enable us to require that $|S\cap \ell|=0, |S\cap \ell'|=q+1$ for an admissible set $S$ and two distinct lines $\ell, \ell'$. But such a set $S$ does not exist since  $\ell\cap\ell'\not \in S$ and   $\ell\cap\ell' \in S$ would be required at the same time.
	
	Next we show that the above algebraic method provides a sharp result in the sense that even  in total $q^2+q+2$ forbidden values $ \sum_{\ell\in \mathcal{L}}|f^*(\ell)|$  cannot be met in general.
	
	\begin{const}\label{q2+q+2}
		Take a set $\mathcal{L}_P:=\{\ell: P\in \ell\}$ of lines through a given point $P$. Let $f^*(\ell):=\{1,2,\ldots, q\}$ for all $\ell \in \mathcal{L}_P$.
		Take two (not necessarily distinct) lines $\ell', \ell'' \in \mathcal{L}\setminus \mathcal{L}_P$ and set $f^*(\ell'):=\{0\}$ , $f^*(\ell''):=\{q+1\}$. 
	\end{const} 
	
	\begin{prop} There is no $f^*$-avoiding point set for  Construction \ref{q2+q+2}.
	\end{prop}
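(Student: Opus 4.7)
The plan is to run a dichotomy on whether the distinguished point $P$ lies in $S$, and exploit the extremely restrictive forbidden sets $f^*(\ell)=\{1,\ldots,q\}$ along the pencil $\mathcal{L}_P$. On every line $\ell\in\mathcal{L}_P$ the only admissible intersection sizes are $0$ and $q+1$, so each line in the pencil is either entirely contained in $S$ or entirely disjoint from $S$.

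If $P\in S$, then every line through $P$ already shares the point $P$ with $S$, ruling out intersection size $0$; hence $|S\cap\ell|=q+1$ for every $\ell\in\mathcal{L}_P$. Since the lines of $\mathcal{L}_P$ cover all points of $\Pi_q$, this forces $S=\mathcal{P}$. If $P\notin S$, then intersection size $q+1$ is impossible for any $\ell\in\mathcal{L}_P$, so $|S\cap\ell|=0$ for every $\ell\in\mathcal{L}_P$, and the same covering argument gives $S=\emptyset$. Thus $S\in\{\emptyset,\mathcal{P}\}$ is the only possibility consistent with the pencil constraints.

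It remains to observe that the two auxiliary lines $\ell',\ell''\notin\mathcal{L}_P$ eliminate both alternatives: $f^*(\ell')=\{0\}$ forbids $S=\emptyset$, while $f^*(\ell'')=\{q+1\}$ forbids $S=\mathcal{P}$. Consequently no $f^*$-avoiding point set exists. The only step worth care is the covering observation (every point of $\Pi_q$ lies on some line of $\mathcal{L}_P$), which is immediate from the axioms of a projective plane; there is no genuine obstacle in the argument.
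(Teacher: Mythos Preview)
Your proof is correct and follows essentially the same approach as the paper: both argue that the pencil constraints force each line through $P$ to be entirely in or out of $S$, split on whether $P\in S$ to conclude $S\in\{\emptyset,\mathcal{P}\}$, and then use $\ell'$ and $\ell''$ to rule out both cases. Your version is slightly more explicit about the covering step, but the argument is the same.
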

	
	\begin{proof} Suppose by contradiction the existence of such a set. The conditions on  $f^*(\ell): \ell \in \mathcal{L}_P$ ensures that 
		every  line $\ell \in \mathcal{L}_P$ is either contained or is disjoint from an   $f^*$-avoiding point set. Observe that if $P$ is in the $f^*$-avoiding point set, then then this implies that the set is the point set of $\Pi_q$ while if $P$ is not in the set then it should be the empty set. However, the conditions $f^*(\ell'):=\{0\}$, $f^*(\ell''):=\{q+1\}$ exclude both possibilities.
	\end{proof}
	
	Instead of a global bound on $\sum_{\ell\in \mathcal{L}}|f^*(\ell)|$, one may impose local (upper) bounds on  $|f^*(\ell)|$ and investigate whether these conditions can be met to find  a $f^*$-avoiding point set.
	
	\begin{problem}\label{fstaravoiding} Determine the maximum value $M$ for which the following holds. For every  function $f^*:\mathcal{L}\to 2^{\mathbb{N}}$ which satisfies $|f^*(\ell)|\leq M:=M(q)$ for all $\ell$, there exists an $f^*$-avoiding set in $\Pi_q$.
	\end{problem}
	
	Finally, let us mention that for any  $g:\mathcal{L}\to {\mathbb{N}}$, there exists at most one point set $S$ such that for each line $\ell\in \mathcal{L}$, $|S\cap \ell| = g(\ell)$, that is, a set is characterised via its intersection pattern. 
	Let us denote the incidence matrix of the projective plane by $M$, which is well know to be a non-singular matrix. The claim above follows from the fact that a characteristic vector $\textbf{v}$ of a set 
	will be a unique solution to the equality $M^T\textbf{v}=\textbf{g}$, where $\textbf{g}=(g(\ell_1), g(\ell_2), \ldots, g(\ell_{q^2+q+1}))$ is the vector obtained by the prescribed intersection numbers. 
	

	\section{The critical window}\label{sec3}
	
	Recall that the critical window is $[(k-1)q+2, (k+1)q-1]$ with respect to $k$ for projective planes $\Pi_q$, which is claimed to consists of the crucial values in order to determine the spectrum  $\Sp(k,\Pi_q)$.
	
	\subsection{Out of the critical window}\label{label34}
	
	First we show that  given $k, q$, there exist suitable point sets in an arbitrary projective plane $\Pi_q$ of order $q$ of size $m$ which do not have $k$-secants for every $m$ outside the critical window $[q(k-1), q(k+1)+1]$. This relies on the perturbation of a pencil of $s\approx m/q$ lines. Typically, one may take a pencil of $\lfloor m/q\rfloor +1$ lines and remove a suitable amount of points from one of these lines to obtain an appropriate point set. However, as this would not work in some particular cases, we will use two partial lines instead of one.
	
	\begin{const}\label{triviconst}
		Let $k\geq 1$. Take a point $P$ in $\Pi_q$, let $\ell_1, \ldots \ell_{s}$ be $s$ lines through $P$ for some $2\leq s\leq q+1$, and for $i\in\{1,2\}$, let $X_i\subset \ell_{i}\setminus \{P\}$ be a set of size $|X_i|=t_i$, where $0\leq t_i\leq q$. Let 
		\[S = \cup_{i=3}^{s}(\ell_i\setminus\{P\}) \cup X_1\cup X_2.\]
	\end{const}
	
	\begin{prop}\label{triviconstprop}
		Consider the set $S$ in Construction \ref{triviconst}. Then $|S|=(s-2)q+t_1+t_2$, and for any line $\ell$ of $\Pi_q$, $|S\cap \ell|\in\{0,t_1,t_2,s-2,s-1,s,q\}$.
	\end{prop}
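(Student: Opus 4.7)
The plan is to verify the two claims by direct case analysis, leveraging the fact that the $s$ lines $\ell_1,\dots,\ell_s$ form a pencil through $P$ so their pairwise intersection is exactly $\{P\}$.

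For the cardinality, I would first observe that for $3\leq i\leq s$ the sets $\ell_i\setminus\{P\}$ have $q$ points each and are pairwise disjoint (any two of the $\ell_i$ meet only at $P$, which has been removed). Similarly, $X_1\subset\ell_1\setminus\{P\}$ and $X_2\subset\ell_2\setminus\{P\}$ are disjoint from each other and from every $\ell_i\setminus\{P\}$ with $i\geq 3$ for the same reason. Summing the sizes gives $|S|=(s-2)q+t_1+t_2$.

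For the intersection sizes, I would split into three cases according to how $\ell$ relates to the pencil. \textbf{Case 1:} $\ell=\ell_i$ for some $i$. Then $|S\cap\ell|$ equals $t_1$, $t_2$, or $q$ depending on whether $i=1$, $i=2$, or $i\geq 3$. \textbf{Case 2:} $\ell\neq\ell_i$ for all $i$ but $P\in\ell$. Then $\ell\cap\ell_i=\{P\}\not\in S$ for every $i$, and since $X_1,X_2$ lie on $\ell_1,\ell_2$ respectively, the line $\ell$ meets neither; all other points of $S$ sit on some $\ell_i$, so $|S\cap\ell|=0$. \textbf{Case 3:} $P\notin\ell$. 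Then $\ell$ crosses each $\ell_i$ in a unique point distinct from $P$, and these $s$ points are pairwise distinct because two lines of the pencil share only $P$. For $3\leq i\leq s$ each such intersection point lies in $\ell_i\setminus\{P\}\subset S$, contributing exactly $s-2$ points. For $i\in\{1,2\}$, the intersection point lies in $S$ iff it happens to fall in $X_i$, contributing either $0$ or $1$ each. Hence $|S\cap\ell|\in\{s-2,s-1,s\}$.

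Combining the three cases yields the claimed list $\{0,t_1,t_2,s-2,s-1,s,q\}$ of possible intersection sizes. There is no real obstacle here; the only thing to be slightly careful about is not to double-count in Case~3 when $X_1,X_2$ and the $\ell_i\setminus\{P\}$ might coincidentally share intersection points with $\ell$, but the disjointness established for the size computation rules this out immediately.
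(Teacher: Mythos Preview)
Your proof is correct and follows essentially the same approach as the paper: a direct case analysis according to whether $P\in\ell$ or not, with the former yielding the values $0,t_1,t_2,q$ and the latter yielding $s-2,s-1,s$. The only difference is cosmetic---you split the case $P\in\ell$ into two subcases (whether $\ell$ is one of the $\ell_i$ or not), whereas the paper handles them in one breath.
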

	\begin{proof}
		The size of $S$ is self evident. Let $\ell$ be any line of $\Pi_q$. If $P\in\ell$, then $|S\cap \ell|$ is $t_i$, $q$ or $0$ according to whether $\ell\in\{\ell_1,\ell_2\}$, $\ell=\ell_i$ for some $3\leq i\leq s$, or none of the two previous cases, respectively. If $P\notin\ell$, then $\ell$ meets each $\ell_i$ in a unique point ($1\leq i\leq s$), and at most two of these may be not in $S$, giving $s-2\leq |S\cap\ell|\leq s$.
	\end{proof}
	
	From this, we get the
	
	\begin{proof}[Proof of Proposition \ref{trivikonst}] 
		As observed in the Introduction, we may assume $1\leq k\leq (q+1)/2$.
		Consider the point set $S$ defined in Construction \ref{triviconst}. By Proposition \ref{triviconstprop}, $S$ does not have a $k$-secant if $s\leq k-1$ or $s\geq k+3$ and, furthermore, $k\notin\{t_1,t_2\}$. 
		
		Note that if $k\leq 2$, then $2\leq s\leq k-1$ is not possible. However, for $k=1$, we only need to prove the existence of $1$-avoiding sets of size $m\geq 2q$. If $k=2$, then $m\neq 2$ collinear points ($0\leq m\leq q+1$) provides a $2$-avoiding set of size $m$ (and, clearly, no $2$-avoiding set of size $2$ exists). If $k\geq 3$, Construction \ref{triviconst} can provide examples in the lower region.
		
		Suppose now we want to achieve the size $|S|=(s-2)q+x$ with $0\leq x\leq 2q$ for some $s\in\{2,\ldots,k-1,k+3,\ldots,q+1\}$. If $x=0$ or $x=2q$, $t_1=t_2=0$ or $t_1=t_2=q$ are suitable as $1\leq k\leq (q+1)/2$. If $x=1$, we can only choose $\{t_1,t_2\}=\{0,1\}$, which is not suitable if and only if $k=1$. If $x=2q-1$, then $\{t_1,t_2\}=\{q-1,q\}$. As $k\leq (q+1)/2$, this may cause a problem only if $k=q-1$ and $q-1\leq (q+1)/2$, that is, $q\leq 3$, which is ruled out by our restriction $q\geq 4$. If $2\leq x\leq 2q-2$, then we have at least two options for $\{t_1,t_2\}$, at least one of which will be suitable, namely $t_1=\lfloor x/2\rfloor$ and $t_2=\lfloor x/2\rfloor$, or $t_1=\lfloor x/2\rfloor-1$ and $t_2=\lfloor x/2\rfloor+1$. Hence the only possibly not achievable size in the interval $[(s-2)q,sq]$ remains $|S|=(s-2)q+1$ for $k=1$. However, to achieve this size, we may decrease $s$ by one and set $x=q+1$, unless $s=k+3=4$. This means that for $k=1$, the only size we can not achieve is $|S|=2q+1$. 
		
		Thus, if $q\geq 4$, taking all possible values of $s$ we get that $[0,(k-1)q]\cup[(k+1)q,(q+1)q] \subset \Sp(k,\Pi_q)$ (with the only exceptions for $k=1$ and $|S|=2q+1$, and $k=2$ and $|S|=2$). Clearly, $q^2+q+1\in\Sp(k,\Pi_q)$ holds as well. Moreover, if $k\geq 2$, we may take the union of the point sets of $k-1$ lines passing through a point $P$. This set is of size $(k-1)q+1$ and every line meets it in $1$, $k-1$ or $q+1$ points, hence it is $k$-avoiding. Summarizing these results, we get the desired statement.
	\end{proof}
	
	\subsection{Missing values in the critical window when $k$ is small}
	
	Let us mention in advance that we expect several values $m$ in the critical window which are missing from the spectrum $\Sp(k,\Pi_q)$ if $k$ is relatively small.  We elaborate on previously known results and some new ones below.
	
	\subsubsection{$k=0$, blocking sets}
	
	Recall that for $k=0$, determining the spectrum is trivial: any set of at most $q$ points has at least one skew line, furthermore, any set containing a line is $0$-avoiding (or, as called commonly, a blocking set), hence $\Sp(0,\Pi_q)=[q+1,q^2+q+1]$. Thus, the values in the upper half of the critical window are indeed missing.
	
	\subsubsection{$k=1$, untouchable sets}\label{UTS}
	
	The study of $1$-avoiding (or untouchable) sets was initiated in \cite{Blokhuis}, and the focus was put on finding the smallest possible size of such a set (which is interesting only if $q$ is odd, hence untouchable sets are often studied under this assumption on $q$). Let us collect some results about these which are relevant to Problem \ref{mainproblem}. A \emph{set of even type} is a set of points which meets every line in an even number of points. Clearly, a set of even type is $1$-avoiding. It is easy to show that if there is a set of even type in $\Pi_q$, then $q$ must be even.
	
	\begin{theorem}[Lower bounds on the size of $1$-avoiding sets]
		Let $S$ be a $1$-avoiding set in the desarguesian projective plane $\PG(2,q)$. Then the following hold.
		\begin{enumerate}
			\item $|S|\geq q+2$, and an example of  size $q+2$ exists if and only if $q$ is even (folklore). These examples are \emph{hyperovals}, which are sets of type $(0,2)$.
			\item If $q$ is odd, then $|S|>q+\frac14\sqrt{2q}+2$ \cite{Blokhuis}. If $q$ is even and $S$ is not of even type, then $|S|\geq q+\sqrt{q/6}+1$ \cite{BlokhuisSzonyiW}.
			\item The size $u_q$ of the smallest $1$-avoiding set in $\PG(2,q)$, $q$ odd, is the following \cite{Blokhuis, Geertrui}:\\
			\begin{tabular}{c|c|c|c|c|c}
				$q$   &  3  &  5  &  7  &  9  &  11    \\ \hline
				$u_q$ &  6  &  10 &  12 & 15  &  18
			\end{tabular}
		\end{enumerate}   
	\end{theorem}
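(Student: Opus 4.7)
The plan is to handle the three items by increasingly sophisticated tools. For the elementary bound in (1), I would fix any point $P\in S$ and look at the $q+1$ lines through $P$; each carries at least two points of $S$ since $1$-secants are forbidden. Summing $|\ell\cap S|-1$ over these lines counts the points of $S\setminus\{P\}$ each exactly once, giving $|S|-1\ge q+1$, hence $|S|\ge q+2$. If equality holds, every line through $P$ is exactly a $2$-secant; letting $P$ vary shows that $S$ is a $(q+2,2)$-arc, i.e.\ a hyperoval of type $(0,2)$. Such a set exists iff $q$ is even: for $q$ even a conic together with its nucleus works, while for $q$ odd any external point lies on $q+1$ lines meeting $S$ in $0$ or $2$ points, forcing $|S|=q+2$ to be even, a contradiction.

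For the lower bounds in (2) with $q$ odd, I would set up a Rédei-type polynomial $R(X,Y)=\prod_{(a,b)\in S}(X+aY-b)$ in $\F_q[X,Y]$ after placing a suitable line at infinity disjoint from $S$, so that $|S|$ is the $X$-degree. For fixed $y\in\F_q$, the multiplicities of roots of $R(X,y)$ encode the intersection sizes of $S$ with lines of slope $y$, and the absence of tangents forces every linear factor to appear with multiplicity different from $1$, i.e.\ either at least twice or not at all in the relevant residue. Passing to $\gcd(R,R_X)$ and analysing the degrees of its totally reducible and irreducible parts over $\F_q$, as in Blokhuis--Seress--Wilbrink, one extracts the inequality $|S|>q+\tfrac14\sqrt{2q}+2$. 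For $q$ even, one separates the parity of $|S\cap \ell|$ on each line: sets of even type cost nothing, but if at least one line $\ell$ is an odd-secant, a stability argument of Blokhuis--Szőnyi--Weiner counting the odd lines through points on and off $\ell$ yields $|S|\ge q+\sqrt{q/6}+1$.

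For (3), I would not attempt closed-form proofs but instead verify the table by exhaustive computer search in $\PG(2,q)$ for $q\in\{3,5,7,9,11\}$: starting from the bound in (2), iterate over candidate sizes and either exhibit an explicit $1$-avoiding configuration realising the claimed $u_q$ or certify non-existence by backtracking over orbits under $\mathrm{P\Gamma L}(3,q)$. The main obstacle in the whole plan is the polynomial-method step for (2): the careful factorisation analysis of $R$ and of $\gcd(R,R_X)$ needed to squeeze out the $\tfrac14\sqrt{2q}$ gain is the technical heart, whereas (1) is a short double count and (3) is essentially computational.
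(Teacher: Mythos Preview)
The paper does not supply a proof for this theorem at all: it is stated as a summary of known results, each item carrying a citation (\cite{Blokhuis}, \cite{BlokhuisSzonyiW}, \cite{Geertrui}) and nothing more, with the text continuing immediately to ``Next we give several constructions\ldots''. So there is no ``paper's own proof'' to compare against; your proposal goes well beyond what the paper does by actually sketching arguments for the cited results.

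That said, your sketches are essentially faithful to the original sources. The double count in (1) is the standard folklore argument, and your parity trick for ruling out hyperovals when $q$ is odd (summing $|\ell\cap S|\in\{0,2\}$ over the pencil through an external point forces $|S|$ even, contradicting $|S|=q+2$ odd) is correct. For (2), the Rédei-polynomial setup with analysis of $\gcd(R,R_X)$ is indeed the method of Blokhuis--Seress--Wilbrink, and the odd-secant stability count is what Blokhuis--Sz\H{o}nyi--Weiner do; your description is accurate at the level of a plan, though of course the actual degree bookkeeping that produces the constant $\tfrac14\sqrt{2q}$ is where all the work lies, as you acknowledge. Item (3) is handled in the cited papers partly by explicit constructions and partly by search, so your plan matches. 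In short: nothing is wrong, but be aware that for the purposes of \emph{this} paper no proof is expected here---the theorem functions as background, and a citation suffices.
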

	
	Next we give several constructions found in the literature in a somewhat unified manner, which suggests the upcoming generalisation of these examples. We will need several notions regarding blocking sets which we briefly recall here; for  more information about these subjects, we refer to \cite{KSz} or \cite{Hirsch}.
	
	A \emph{blocking set} in a projective plane $\Pi_q$ is a set $\cB$  of points which intersects every line. A blocking set $\cB$ is of \emph{Rédei type} if $|\cB|=q+k$ ($k\leq q$) admitting a $k$-secant line, called a \emph{Rédei line of $\cB$}. Equivalently, Rédei type blocking sets in $\PG(2,q)$ can be described as the union of the graph of a function $f\colon\GF(q)\to\GF(q)$ and the set of directions determined by $f$. A \emph{Baer subplane} in $\Pi_q$ is a subplane of order $\sqrt{q}$. It is a blocking set of size $q+\sqrt{q}+1$. By Bruen's well-known result, we have that in any projective plane of order $q$, any blocking set of size $q+\sqrt{q}+1$ either contains a line or is a Baer subplane. As Baer subplanes admit $(\sqrt{q}+1)$-secants, they are also Rédei type blocking sets. Let us remark that lines are also Rédei type blocking sets. A blocking set is called \emph{small} if it has less than $3(q+1)/2$ points.
	
	\begin{theorem}[Constructions of $1$-avoiding sets]\label{UTSconstr}
		In $\PG(2,q)$, there is a $1$-avoiding set of the following size:
		\begin{enumerate}
			\item $2q$; this is the symmetric difference of two lines \cite{Blokhuis}, also covered by Construction \ref{triviconst}.
			\item $2q-2$ if $q\geq 7$; this is the symmetric difference of two carefully chosen ovals which have exactly two common points \cite{Blokhuis}.
			\item $2q-\sqrt{q}$, if $q$ is a square; this is the symmetric difference of a Baer subplane and a line \cite{Blokhuis}.
			\item $2q-\frac{q}{p}$ \cite{Blokhuis} and $2q-\frac{q-p}{p-1}$ \cite{Geertrui, LSVdV}\footnote{Let us remark that Theorem 6 of \cite{Geertrui} is mistyped, the size of the set is claimed to be $q+\frac{q-p}{p-1}$. However, \cite{Geertrui} refers to \cite{LSVdV}, supposedly page 243, as the original source of the construction.}, where $\GF(p)$ is any proper subfield of $\GF(q)$; these are the symmetric differences of particular Rédei type blocking sets and a Rédei line of it.
			\item $2q-k$, where $k<(q+3)/2$ is the number of directions determined by a function $f\colon \GF(q)\to \GF(q)$; this is the symmetric difference of any small Rédei type blocking set (of size $q+k$) and a Rédei line of it \cite{Geertrui}.
		\end{enumerate}
	\end{theorem}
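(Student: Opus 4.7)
The plan is to verify each of the five items by exhibiting the claimed set and checking that no line meets it in exactly one point. All five constructions are of the form $S=X_1\triangle X_2$, and the underlying computation is
\[
|m\cap S|=|m\cap X_1|+|m\cap X_2|-2\,|m\cap X_1\cap X_2|,\qquad |S|=|X_1|+|X_2|-2|X_1\cap X_2|,
\]
for every line $m$. The task in each case is therefore to control the line intersection sizes of $X_1$ and $X_2$.

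For items (1), (3), (4) and (5), I would take $X_2=\ell$ to be a line. The formula then splits into three subcases: $m=\ell$ gives $q+1-|\ell\cap X_1|$; $m\neq\ell$ with $m\cap\ell\in X_1$ gives $|m\cap X_1|-1$; and $m\neq\ell$ with $m\cap\ell\notin X_1$ gives $|m\cap X_1|+1$. Consequently $S$ is $1$-avoiding provided $|\ell\cap X_1|\neq q$, $X_1$ has no $2$-secant through any point of $\ell\cap X_1$, and every line through a point of $\ell\setminus X_1$ meets $X_1$. In item (1), $X_1=\ell_1$ is a line, so its only line intersection sizes are $1$ and $q+1$ and the three conditions are immediate. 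In item (3), $X_1$ is a Baer subplane and $\ell$ a $(\sqrt q+1)$-secant: the line intersection sizes are $1$ and $\sqrt q+1\geq 3$, and the blocking property of Baer subplanes handles the third condition. In items (4) and (5), $X_1$ is a small Rédei type blocking set in $\PG(2,q)$ and $\ell$ a Rédei line ($k$-secant); by the classical lacunary polynomial theorem of Szőnyi on small blocking sets, every line meets $X_1$ in a number of points congruent to $1$ modulo $p$, which rules out $2$-secants in every characteristic, and the blocking property is again automatic. The explicit small Rédei type blocking sets of the required sizes come from the function field constructions recorded in \cite{LSVdV, Geertrui}.

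Item (2) is the only construction not of the form $X_1\triangle\ell$. Here $X_1=O_1$ and $X_2=O_2$ are ovals sharing exactly two points, so $|S|=2q-2$ is immediate from $|O_i|=q+1$. Since each line meets an oval in $0$, $1$, or $2$ points, the formula shows that $|m\cap S|=1$ occurs exactly when either $m$ is tangent to one oval and skew to the other, or $m$ passes through exactly one of the two common points and is tangent to exactly one oval. The main obstacle is therefore to choose the pair $(O_1,O_2)$ so that neither of these forbidden configurations appears; I would follow the pencil-of-conics construction of \cite{Blokhuis}, valid for $q\geq 7$, and verify the absence of both configurations directly from the pencil structure.
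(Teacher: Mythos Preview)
Your proposal is correct and matches the paper's treatment. The paper does not give a standalone proof of this theorem---it is presented as a compilation of constructions from the literature---but immediately afterwards the paper records the general principle (Theorem~\ref{gensymmdiff}) that the symmetric difference of two sets $B_1,B_2$ is $1$-avoiding whenever $\{t\pm 1:t\in T_1\}\cap T_2=\emptyset$. Your case analysis with $X_2=\ell$ a line is exactly the specialisation of that argument, and your three conditions ($|\ell\cap X_1|\neq q$; no $2$-secant through $\ell\cap X_1$; blocking property on $\ell\setminus X_1$) are what the general condition becomes when $T_2=\{1,q+1\}$. One small point worth making explicit: Sz\H{o}nyi's $1\pmod p$ theorem applies to small \emph{minimal} blocking sets, so in items (4) and (5) you should note that a R\'edei type blocking set of size $q+k$ with $k\leq q$ is automatically minimal (removing a direction point uncovers an affine line of that slope with no graph points; removing a graph point uncovers the line through it in a non-determined direction).
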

	
	These are the constructions we have found in the literature which work in $\PG(2,q)$ regardless $q$ odd or even. When $q$ is even, there exist a large variety of sets of even type of different sizes in the critical window (let us mention that so-called Korchmáros-Mazzoca arcs are of this kind, but several other constructions can be found, see \cite{KMarcs, SzonyiWeiner}, for example).
	
	For general $q$, we see that in Theorem \ref{UTSconstr}, all examples but the second are the symmetric difference of a small Rédei type blocking set and one of its Rédei lines. We know by Blokhuis' celebrated paper \cite{Blokhuisblsetp} that small blocking sets other than lines exist in $\PG(2,q)$ if and only if $q$ is not a prime. Therefore the second construction is of particular importance as, besides the trivial first construction, it is the only one which works in $\PG(2,p)$, $p$ prime as well.
	
	In the light of the above comments, the following generalisation is quite obvious, yet we could not find it in the literature.
	
	\begin{theorem}\label{smallblsymmdiff}
		Let $\cB_1$ and $\cB_2$ be two small blocking sets in $\PG(2,q)$. Then the symmetric difference $\cB_1\triangle \cB_2$ of $\cB_1$ and $\cB_2$ is a set without tangents.
	\end{theorem}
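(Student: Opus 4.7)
The plan is to reduce the theorem to Sz\H{o}nyi's celebrated ``$1\pmod p$'' result for small (minimal) blocking sets in $\PG(2,q)$, $q=p^h$, which states that every line meets such a set in a number of points congruent to $1$ modulo $p$. Once this deep input is granted, the remainder is a short inclusion--exclusion calculation combined with an elementary divisibility-plus-parity argument.

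For an arbitrary line $\ell$ of $\PG(2,q)$, I would introduce the counts
\[ a = |\cB_1 \cap \ell|, \qquad b = |\cB_2 \cap \ell|, \qquad c = |\cB_1 \cap \cB_2 \cap \ell|, \]
so that inclusion--exclusion gives $|(\cB_1 \triangle \cB_2) \cap \ell| = a + b - 2c$. Assume for contradiction that this equals $1$. From $c \le \min(a,b)$ one immediately obtains $|a-b| \le a + b - 2c = 1$. On the other hand, Sz\H{o}nyi's theorem yields $a \equiv b \equiv 1 \pmod p$, so $p$ divides $a-b$; since $p \ge 2$ and $|a-b| \le 1$, this forces $a = b$. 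But then $a + b - 2c = 2(a-c)$ is an even non-negative integer, contradicting its equaling $1$. As $\ell$ was arbitrary, $\cB_1 \triangle \cB_2$ has no tangent.

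The bulk of the work is concentrated in the invocation of the $1 \pmod p$ theorem, and the main subtlety I expect is that this result formally requires the blocking sets to be \emph{minimal} in addition to being small (e.g., the non-minimal blocking set obtained by adding one point off a line to a line does not satisfy the congruence). I would therefore proceed under the standard convention of the small-blocking-set literature that ``small blocking set'' means minimal and of size less than $3(q+1)/2$; this reading is already implicit in the authors' earlier use of Blokhuis' theorem that small blocking sets in $\PG(2,p)$ are only lines. Once that convention is fixed, no case split between $p = 2$ and $p$ odd is needed: the combined divisibility-and-parity argument handles both characteristics uniformly in one line.
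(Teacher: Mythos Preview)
Your proof is correct and follows essentially the same route as the paper. The paper does not prove Theorem~\ref{smallblsymmdiff} directly but instead states the more general Theorem~\ref{gensymmdiff} (if the types $T_1,T_2$ satisfy $\{t\pm1:t\in T_1\}\cap T_2=\emptyset$ then $B_1\triangle B_2$ has no tangents), whose proof is the same inclusion--exclusion computation $|\ell\cap(B_1\triangle B_2)|=t_1+t_2-2r$ with $r\le\min(t_1,t_2)$, followed by a two-case split on $r=\min(t_1,t_2)$ versus $r<\min(t_1,t_2)$; Sz\H{o}nyi's $1\pmod p$ theorem is then invoked to verify the hypothesis for small minimal blocking sets. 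Your version replaces the case split by the observation that $|a-b|\le 1$ together with $p\mid a-b$ forces $a=b$, whence $a+b-2c$ is even; this is slightly slicker but equivalent. You are also right to flag the minimality issue: the paper, like you, tacitly relies on the convention that ``small blocking set'' means small and minimal when citing Sz\H{o}nyi's result.
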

	
	Instead of proving this result, we shall give a more general construction. Let us first remark that in his seminar paper \cite{Szonyi1modp}, Szőnyi proved that if $\cB$ is any small and minimal blocking set (with respect to containment) in $\PG(2,q)$, where $q=p^h$, $p$ prime, then every line intersects $\cB$ in $1~\pmod p$ points. Therefore the type $T$ of a small blocking set is rather lacunary: if $t_1\neq t_2\in T$, then $|t_1-t_2|\geq p$. This property shows that the following construction is feasible with small blocking sets.
	
	\begin{theorem}\label{gensymmdiff}
		Let $B_i$ be a set of type $T_i$, $i=1,2$. Suppose that $\{t\pm1\colon t\in T_1\}\cap T_2 = \emptyset$. Then $B_1\triangle B_2$ is a set without tangents.
	\end{theorem}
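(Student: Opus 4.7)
The plan is to pick an arbitrary line $\ell$ and control $|(B_1 \triangle B_2) \cap \ell|$ via inclusion-exclusion on the intersections $B_1 \cap \ell$ and $B_2 \cap \ell$. Setting $a := |B_1 \cap \ell| \in T_1$, $b := |B_2 \cap \ell| \in T_2$, and $c := |B_1 \cap B_2 \cap \ell|$, the standard identity gives
$$|(B_1 \triangle B_2) \cap \ell| = a + b - 2c,$$
and the task reduces to showing this expression is never equal to $1$, regardless of the choice of $\ell$.

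Suppose toward a contradiction that $a + b - 2c = 1$ for some line $\ell$. Rewrite this as $(a - c) + (b - c) = 1$ and note that, since $0 \leq c \leq \min(a, b)$, both summands are nonnegative integers. Hence one of them is $0$ and the other is $1$. Without loss of generality $a = c$ and $b = c + 1 = a + 1$. In particular $b = a + 1$ with $a \in T_1$ and $b \in T_2$, so $a + 1 \in T_2 \cap \{t + 1 \colon t \in T_1\}$, contradicting the hypothesis $\{t \pm 1 \colon t \in T_1\} \cap T_2 = \emptyset$.

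The one mild subtlety I would flag, which also explains the precise shape of the hypothesis, is that the naive parity remark (that $a+b$ must be odd, hence $|a - b|$ is odd) does not suffice on its own: one genuinely needs $|a - b| = 1$, and this is exactly what the decomposition $(a - c) + (b - c) = 1$ with nonnegative integer summands forces. Beyond inclusion-exclusion and this elementary integer argument, no further ingredients are needed; the result is in fact a purely combinatorial identity about intersection numbers and does not use any property of $\PG(2, q)$ or of blocking sets, which is why the statement applies verbatim to the small blocking set setting of Theorem~\ref{smallblsymmdiff} via Szőnyi's $1 \pmod p$ theorem.
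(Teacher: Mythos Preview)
Your proof is correct and follows essentially the same approach as the paper: both set up the identity $|\ell\cap(B_1\triangle B_2)| = a + b - 2c$ and argue that this can equal $1$ only if $|a-b|=1$, contradicting the hypothesis. The paper phrases it as a direct case split on whether $c=\min(a,b)$ or $c<\min(a,b)$, whereas you phrase it via the nonnegative decomposition $(a-c)+(b-c)=1$, but these are the same argument.
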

	\begin{proof}
		Let $\ell$ be a line intersecting $B_1$ in $t_1$, $B_2$ in $t_2$, and $B_1\cap B_2$ in $r$ points ($t_i\in T_i$). Then $|\ell\cap (B_1\triangle B_2)|=t_1+t_2-2r$. Without loss of generality we may assume $t_1\leq t_2$. Note that $r\leq t_1$. If $r=t_1$, then $|\ell\cap (B_1\triangle B_2)|=t_2-t_1$, which cannot be $1$ by our assumption. If $r<t_1$, then $|\ell\cap (B_1\triangle B_2)|\geq t_1+t_2-2(t_1-1)\geq 2$.
	\end{proof}
	
	With these generalisations, we obtain new constructions of untouchable sets of size in the critical window but, unfortunately, no new element in the spectrum of their size so far.
	For example, one can take two small Rédei type blocking sets $\cB_1$ and $\cB_2$ with common Rédei line $\ell$ so that $\cB_1\cap\ell=\cB_2\cap\ell$ and $|(\cB_1\setminus\ell)\cap(\cB_2\setminus\ell)|=1$. Then their symmetric difference is a $1$-avoiding set of size $2q-2$, matching the second result of Theorem \ref{UTSconstr}.

	Two particular problems are the existence of $1$-avoiding sets of size $2q\pm1$ in $\PG(2,q)$. Exhaustive computer search shows that such sets of size $2q-1$ do not exist if $q\leq 11$ is odd or $q=4$, but exist in $\PG(2,8)$ and $\PG(2,16)$. The size $2q+1$ is not covered by Construction \ref{triviconst}, but with computer we have found such examples for $q=4$ and for all $7\leq q\leq 16$. 
	
	\subsubsection{Results for small values of $q$}
	
	For small values of $q$ and for $1\leq k\leq (q+1)/2$, we have performed an exhaustive search with the aid of a computer to determine which values of the critical window are missing from the spectrum. The result is summarized in Table \ref{table:small}. Note that for $q\leq 8$, $\PG(2,q)$ is the unique projective plane of order $q$.

	\newcommand{\emps}{\emptyset}
	\begin{table}[h!]
		\centering
		\begin{tabular}{c||c|c|c|c|c|}
			&  $k=1$             & $k=2$                        & $k=3$   & $k=4$ & $k=5$    \\ \hline\hline
			$q=3$ & $[1,5] \cup \{7\}$            &$\{2\}\cup[5,8]$              &  --     &  --   &  --     \\ \hline 
			$q=4$ &$[1,5]\cup\{7\}$    &$\{2,6,8,10\}$                &  --     &  --   &  --       \\ \hline 
			$q=5$ &$[1,9]\cup{11}$             &$\{2\}\cup[7,11]\cup\{13,14\}$&$[12,19]$&  --   &  --      \\ \hline 
			$q=7$ &$[1,11]\cup\{13\}$  &$\{2\}\cup[10,17]$            &$\{19\}$ &$\emps$&  --      \\ \hline
			$q=8$ &$[1,9]\cup\{11,13\}$&$\{2, 10, 11, 14, 18\}$       & $\emps$ &$\emps$&  --      \\ \hline 
			$q=9$ &$[1,14]\cup\{17\}$  &$\{2,11\}\cup[14,19]$         & $\emps$ &$\emps$&$\emps$   \\ \hline 
		\end{tabular}
		\caption{\label{table:small}For given $q$ and $1\leq k\leq \lfloor(q+1)/2\rfloor$, the table lists those values $m$ for which a $k$-avoiding set of size $m$ \emph{does not exist} in $\PG(2,q)$. The results are based on computer search.}
		\label{tab:smallq}
	\end{table}
	
	Let us remark that $\Sp(k,\Pi_q)$ indeed depends on the structure of the plane: in $\PG(2,9)$, there is no $1$-avoiding set of size $14$, but there is one in the Hughes plane of order $9$ \cite{Blokhuis}.

	
	

	\section{Constructions of size $m$ in the critical window}\label{sec:4}
	
	Next we give several constructions based on specific substructures of projective planes. The common idea behind them is that we find a $k$-avoiding set of size in the critical window with very few line intersection sizes (so the type of the set is very restricted), and then we modify it either by adding large sets of collinear points to it (lines and subsets of lines, similar to Construction \ref{triviconst}), or by removing some points from it. We do this so that we can keep track of the intersection sizes with lines. The goal is to increase or decrease $k$ and to let the size vary so that we get a large interval in the spectrum. 
	
	\subsection{Constructions for $q$ even via maximal arcs}
	
	The first substructures we utilize are maximal $(k,n)$-arcs.
	
	
	\begin{defi}[$(k,n)$-arcs, maximal $(k,n)$-arcs]
		A set $K$ of $k$ points is called a  \emph{$(k, n)$-arc} if 
		\[\max_{\ell\in \cL} \{|\ell \cap K| : \ell\in \mathcal{L}(\Pi_q) \} = n.\] For given $q$ and $n$, the cardinality of a $(k, n)$-arc $K$ can never exceed $(n- 1)(q + 1) + 1$. A \emph{maximal $(k, n)$-arc} is a $(k,n)$-arc of size $k=(n-1)(q+1)+1$. Equivalently, a maximal $(k,n)$-arc can be defined as a nonempty point set of the projective plane which meets every line in either $n$ or in zero points.
	\end{defi}
	
	While in $\PG(2,q)$ with $q$ odd, Ball,  Blokhuis and Mazzocca proved that no non-trivial maximal arcs exist \cite{BallBlokhuis}, we do have examples when $q$ is even. From a classical result of Denniston \cite{Denniston}, we know that maximal $(k,n)$-arcs exist in $\PG(2,q)$, $q=2^h$, for every divisor $n$ of $q$. In the upcoming construction, we use maximal $(3q+4,4)$-arcs to fill the upper half of the critical window for $3\leq k\leq q/4+3$.

	\begin{const}\label{evenq/4}
		Suppose that $q$ is even. Let $\cA$ be a maximal $(3q+4,4)$-arc in $\Pi_q$, and choose a point $P\notin \cA$. Let $\ell_1,\ldots,\ell_{q/4}$ be the skew lines to $\cA$ through $P$, and let $0\leq r\leq q/4-1$ (Option 1) or $0\leq r\leq q/4$ (Option 2). Let $\ell'=\ell_{r+1}$ be a skew line through $P$ (Option 1), or let $\ell'$ be a $4$-secant to $\cA$ through $P$ (Option 2). Let $X\subset \ell'\setminus(\{P\}\cup\cA)$ be a set of $t$ points, $0\leq t\leq q-4$. Let 
		\[\cS=\cA\cup \bigcup_{i=1}^r(\ell_i\setminus\{P\}) \cup X.\]
	\end{const}
	
	\begin{prop}\label{evenq/4prop}
		The set $\cS$ in Construction \ref{evenq/4} has $(r+3)q+4+t$ points, and for each line $\ell$, $|\ell\cap \cS|\in\{0,4,t,q,r,r+1,r+4,r+5\}$ in case of Option 1, and $|\ell\cap \cS|\in\{0,4,t+4,q,r,r+1,r+4,r+5\}$ in case of Option 2. In particular, if $r\geq 2$, then at least one of the Options for $\cS$ yields an $(r+3)$-avoiding set.
	\end{prop}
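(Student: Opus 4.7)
\textbf{Proof plan for Proposition \ref{evenq/4prop}.}

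The plan has three parts: (i) verify the cardinality by a disjointness argument, (ii) enumerate the intersection numbers by case-analysis on lines, (iii) check the avoidance claim algebraically. First I would confirm pairwise disjointness of $\cA$, $\bigcup_{i=1}^r(\ell_i\setminus\{P\})$, and $X$. The lines $\ell_i$ ($i\leq r$) are skew to $\cA$ and pairwise meet only at the excluded point $P$, contributing $rq$ points outside $\cA$. The set $X$ lies on $\ell'$ and avoids $\{P\}\cup\cA$ by definition, and since in both Options $\ell'\cap\ell_i\subset\{P\}$ for $i\leq r$, $X$ is disjoint from $\bigcup_{i=1}^r(\ell_i\setminus\{P\})$. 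Summing sizes then yields $|\cS|=(3q+4)+rq+t=(r+3)q+4+t$.

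Next I would determine the type of $\cS$ by splitting on whether a line passes through $P$. For $\ell$ through $P$: $\ell$ is either skew to $\cA$ or a $4$-secant, since $\cA$ is a maximal $(3q+4,4)$-arc. If $\ell=\ell_i$ with $i\leq r$, then $\ell\cap\cS=\ell_i\setminus\{P\}$ has size $q$; if $\ell=\ell'$, then $\ell'\cap\cS=(\ell'\cap\cA)\cup X$ has size $t$ in Option~1 (where $\ell'\cap\cA=\emptyset$) and $t+4$ in Option~2; any other skew line through $P$ contributes $0$; any other $4$-secant through $P$ contributes $4$. For $\ell$ not through $P$: such a line meets each $\ell_i$ ($i\leq r$) in a distinct point different from $P$, hence contains exactly $r$ points of $\bigcup(\ell_i\setminus\{P\})$; the maximal-arc property forces $|\ell\cap\cA|\in\{0,4\}$; and $|\ell\cap X|\in\{0,1\}$ since $X\subset\ell'$ and $\ell\neq\ell'$. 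Because the three contributions sit in disjoint sets (the $\ell_i$'s are skew to $\cA$ and avoid $\ell'$ away from $P$), they add, giving $|\ell\cap\cS|\in\{r,r+1,r+4,r+5\}$. Pooling both cases produces exactly the two type sets claimed in the proposition.

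Finally, for the avoidance claim with $r\geq 2$: the target value $r+3$ clearly differs from $r$, $r+1$, $r+4$, $r+5$ and from $0$; it differs from $4$ because $r\geq 2$, and from $q$ because $r+3\leq q/4+3<q$ in the relevant range (maximal $(3q+4,4)$-arcs force $q\geq 8$ once $r\geq 2$). The only remaining coincidences are $t=r+3$ in Option~1 and $t=r-1$ in Option~2. Since $r+3\neq r-1$, for any admissible $t$ at least one of these two linear obstructions is avoided, so the corresponding Option produces an $(r+3)$-avoiding set.

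The main obstacle is the disjointness bookkeeping for lines $\ell\not\ni P$: one must confirm that the $r$ points of $\ell\cap\bigcup(\ell_i\setminus\{P\})$, the possible $4$ points of $\ell\cap\cA$, and the possible point of $\ell\cap X$ are genuinely distinct. This is precisely where the skewness of the $\ell_i$ to $\cA$ and the choice $X\subset\ell'\setminus(\cA\cup\{P\})$ with $\ell'\notin\{\ell_1,\dots,\ell_r\}$ come in; once this is carefully verified, the rest is routine enumeration.
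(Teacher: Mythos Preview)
Your proposal is correct and follows essentially the same approach as the paper: split lines according to whether they pass through $P$, use that $\cA$ has type $(0,4)$, and count contributions from the $\ell_i$, from $\cA$, and from $X$ separately. You are in fact more careful than the paper on two points: you spell out the disjointness bookkeeping explicitly, and you actually argue why at least one Option avoids $r+3$ (namely, because the obstructions $t=r+3$ and $t=r-1$ cannot hold simultaneously), whereas the paper only notes that $4$ and $q$ are excluded and leaves the $t$-obstruction implicit.
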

	\begin{proof}
		Let $\ell$ be any line. If $P\in\ell$, then $\ell$ is either a skew or a $4$-secant line to $\cA$, hence $|\ell\cap\cS|\in\{0,4,q,t\}$ in case of Option 1, and $|\ell\cap\cS|\in\{0,4,q,t+4\}$ in case of Option 2. If $P\notin\ell$, then $\ell$ meets $\cA$ in $0$ or $4$ points, $\ell$ contains one point of $\cS$ on each line $\ell_1,\ldots,\ell_r$ and at most one more point of $\cS$ on $\ell'$. Thus $|\ell\cap\cS|\in\{r,r+1,r+4,r+5\}$. By $2\leq r\leq q/4$, we have $q\geq 8$, and thus $4$-secants and $q$-secants are not $(r+3)$-secants.
	\end{proof}
	
	\begin{theorem}\label{evenq/4thm}
		Let $q\geq 4$ be even, $3\leq k\leq q/4+3$, and let $m\in[kq+4,(k+1)q]$. Then there exists a $k$-avoiding set of size $m$ in $\PG(2,q)$ unless $k=4$ and $m=4q+4$, or $k=q/4+3$ and $m=kq+k$.
	\end{theorem}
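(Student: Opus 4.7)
The plan is to apply Construction \ref{evenq/4} with $r:=k-3$, so that the resulting set $\cS$ has exactly $kq+4+t$ points for each $t\in[0,q-4]$; as $t$ ranges over $[0,q-4]$, the size $|\cS|$ sweeps out the target interval $[kq+4,(k+1)q]$. By Proposition \ref{evenq/4prop}, the type of $\cS$ is contained in the fixed part $F=\{0,4,q,k-3,k-2,k+1,k+2\}$ together with one variable entry, which is $t$ under Option 1 and $t+4$ under Option 2. Hence it suffices to pick an Option and a value of $t$ so that $k$ lies in neither the fixed part nor the variable one.

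For $k\geq 5$ one checks that $k\notin F$: the only non-automatic candidates are $k=4$ (ruled out by $k\geq 5$) and $k=q$, but $k\leq q/4+3<q$ holds once $q\geq 8$. So Option 1 avoids $k$ iff $t\neq k$, and Option 2 avoids $k$ iff $t\neq k-4$; these two forbidden $t$-values differ by $4$, so for any $t$ at most one Option is blocked and at least one succeeds, provided both are available. Option 1 requires $r\leq q/4-1$, i.e.\ $k\leq q/4+2$, while Option 2 requires $r\leq q/4$, i.e.\ $k\leq q/4+3$. Thus only at the upper boundary $k=q/4+3$ is Option 1 unavailable, and Option 2 then fails precisely when $t=k-4$, i.e.\ at $m=kq+k$. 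This reproduces the second exception in the theorem.

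The remaining small cases $k=3$ and $k=4$ lie outside Proposition \ref{evenq/4prop}'s $(r+3)$-avoidance guarantee, which needs $r\geq 2$. For $k=3$ a direct inspection using Option 2 gives type $\{0,1,4,5,t+4,q\}$, which misses $3$ for every $t\geq 0$, so every $m\in[3q+4,4q]$ is attained. The case $k=4$ is the main obstacle: both Options always contain $4$ in the type, inherited from the $4$-secants of the underlying maximal $(3q+4,4)$-arc, so a separate construction is required for $m\in[4q+5,5q]$. The endpoint $m=5q$ is easy---the union of five concurrent lines through a point $P$ minus $P$ has type $\{0,5,q\}$ and is trivially $4$-avoiding---and for the intermediate sizes I would modify this pencil (for example, by trimming one of the five lines to a carefully chosen partial line, or by replacing it with a hyperoval-based substructure) so as to decrease the size one step at a time while keeping every line intersection away from the value $4$. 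Carrying out such a modification and controlling its intersection pattern line by line is the technical heart of the $k=4$ case, and the single excluded value $m=4q+4$ is precisely the size that this ad hoc family falls short of.
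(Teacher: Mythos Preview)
Your argument for $k=3$, for $5\le k\le q/4+2$, and for the boundary case $k=q/4+3$ matches the paper's proof and is correct.

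The gap is in the case $k=4$. Your proposed fix, namely taking five concurrent lines through a point $P$ (minus $P$) and trimming one of them to a partial line, does \emph{not} work: any line $\ell$ not through $P$ meets each of the four full lines in exactly one point of the set and the trimmed line in either zero or one further point, so $|\ell\cap\cS|\in\{4,5\}$, and $4$-secants are unavoidable. The alternative ``hyperoval-based substructure'' is left undefined, so as written the $k=4$ case is unproved.

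The paper's remedy is much simpler and stays within Construction~\ref{evenq/4}: take $\cS$ with $r=1$ and set $\cS'=\cS\cup\{P\}$. Adding $P$ shifts the intersection numbers on lines through $P$ from $\{0,4,q,t\text{ or }t+4\}$ to $\{1,5,q+1,t+1\text{ or }t+5\}$, while lines not through $P$ still meet $\cS'$ in $\{1,2,5,6\}$ points. None of these equals $4$ (choosing the Option appropriately if needed), and $|\cS'|=4q+5+t$ covers exactly $[4q+5,5q]$ as $t$ runs over $[0,q-5]$; the value $m=4q+4$ is genuinely out of reach, which is the stated exception. For $q=4$ the interval $[kq+4,(k+1)q]$ degenerates to the single excluded point $4q+4$, so nothing remains to prove.
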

	\begin{proof}
		We want to apply Construction \ref{evenq/4} with $r=k-3$. 
		If $5\leq k\leq q/4+2$, then $2\leq r\leq q/4-1$, so by Proposition \ref{evenq/4prop}, at least one of the two options for Construction \ref{evenq/4} yields a $k$-avoiding set of size $kq+4+t$ for any $0\leq t\leq q-4$. Note that this also works for $k=3$.
		
		If $k=4$ and $q\geq 8$, then consider the set $\cS$ from Construction \ref{evenq/4} with $r=1$ (here $r\leq q/4-1$, so we may use both options), and let $\cS'=\cS\cup\{P\}$. Then lines through $P$ intersect $\cS'$ in $1,5,q+1$ and $t$ or $t+4$ points (with regard to the two Options), other lines intersect $\cS'$ in $1,2,5$ or $6$ points. As $|\cS'|=4q+5+t$, we get the stated result. If $k=4$ and $q=4$, then $kq+4=(k+1)q=4q+4$, thus by the exception, we have nothing to prove.
		
		If $k=q/4+3$, then $r=q/4$, thus we have only Option 2 in Construction \ref{evenq/4}. This gives us a $k$-avoiding set of size $kq+4+t$ for each $0\leq t\leq q-4$, $t\neq k-4$.    
	\end{proof}

	We can get much closer to close the critical window provided that there exist two disjoint maximal $(3q+4,4)$-arcs. In $\PG(2,16)$, this is  indeed true \cite{Hami}, and we conjecture that it is true in general when $q\geq 16$ is even.
	
	\begin{conj}\label{2maxarcconj}
		For every even $q\geq 16$, there exist two disjoint $(3q+4,4)$-arcs in $\PG(2,q)$.
	\end{conj}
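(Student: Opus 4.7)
The natural starting point is Denniston's construction. For $q = 2^h$ with $h \geq 4$, one fixes an anisotropic binary quadratic form $f(X,Y) = X^2 + XY + \tau Y^2$ over $\GF(q)$ (with $\mathrm{Tr}(\tau) = 1$), obtaining a pencil of conics $C_\lambda : f(X,Y) = \lambda Z^2$ sharing a common nucleus $N$. For every $2$-dimensional $\GF(2)$-subspace $H$ of $(\GF(q),+)$, the union $\cA_H = \{N\} \cup \bigcup_{\lambda \in H \setminus \{0\}} C_\lambda$ is a Denniston maximal $(3q+4,4)$-arc. Any two such arcs drawn from a single pencil contain the common nucleus $N$, so to obtain a disjoint pair one must either combine arcs from two different pencils or translate one of them by a projectivity that moves $N$.

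Given this, the plan is to fix one arc $\cA := \cA_H$ and search for a second arc of the form $\cA' := g \cdot \cA_{H'}$ for some $g \in \mathrm{PGL}(3,q)$ and some subgroup $H'$. A naive averaging over $g$ gives only $\mathbb{E}_g |\cA \cap g \cdot \cA_{H'}| = (3q+4)^2 / (q^2 + q + 1) \approx 9$, which is far from zero and hence does not immediately force the existence of a disjoint pair; finer structure must be exploited.

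The step I would push hardest is the rigidity of the arc structure: any common point $P \in \cA \cap \cA'$ must lie on a line that is simultaneously a $4$-secant of $\cA$ and of $\cA'$. Each arc has exactly $(3q+4)(q+1)/4$ such $4$-secants (a small proportion of all lines, especially for large $q$), so the incidence pattern between the $4$-secants of $\cA$ and those of $\cA'$ is highly constrained. Combining this with the abundance of Denniston arcs, one may hope that for a suitable pair $(g, H')$ no line is a common $4$-secant of $\cA$ and $g \cdot \cA_{H'}$, which would force disjointness.

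The main obstacle is translating these rough counts into a rigorous existence proof uniform in $q \geq 16$. A computational verification for moderate values (e.g.\ $q \in \{32, 64\}$), combined with an asymptotic argument for large $q$ based on the more flexible Mathon family of non-Denniston maximal arcs, seems to be the most promising route. Extending Hamilton and Penttila's $q = 16$ construction to general even $q \geq 16$ likely requires a new algebraic idea, for instance a suitable group action (perhaps by the multiplicative group of a subfield of $\GF(q)$) that produces the second arc as a ``translate'' of the first and admits a clean criterion for disjointness.
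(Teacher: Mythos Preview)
The statement you are attempting to prove is a \emph{conjecture} in the paper, not a theorem: the authors do not give a proof, and indeed explicitly flag it as open (the only verified case they cite is $q=16$, due to Hamilton, Stoichev and Tonchev). So there is no ``paper's own proof'' to compare against.

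Your proposal is honest about its status: it is a research plan, not a proof. The Denniston-pencil starting point is sensible, and you correctly identify the immediate obstruction (two arcs from the same pencil share the nucleus $N$). But the subsequent steps are all heuristic. The averaging calculation $\mathbb{E}_g|\cA\cap g\cdot\cA_{H'}|\approx 9$ is correct and, as you note, useless on its own. The ``rigidity'' paragraph is where the argument would have to happen, but the observation that a common point lies on common $4$-secants does not obviously lead anywhere: the number of $4$-secants of a single arc is $(3q+4)(q+1)/4$, which is a constant fraction (roughly $3/4$) of all lines, not a ``small proportion'', so the incidence constraint is much weaker than you suggest. Hoping that ``no line is a common $4$-secant'' is far too strong a condition to aim for (it would force every $4$-secant of $\cA'$ to be skew to $\cA$, which is impossible since $\cA$ has only about $q^2/4$ skew lines).

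In short: the proposal contains no argument that could be completed into a proof as written, and one of its key quantitative intuitions (that $4$-secants are rare) is off by a constant factor that kills the approach. This matches the paper's assessment that the statement is genuinely open.
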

	
	\begin{lemma}\label{2maxarclemma}
		Suppose that there are two disjoint $(3q+4,4)$-arcs in $\Pi_q$, $\cK_1$ and $\cK_2$, and let $\cA=\cK_1\cup\cK_2$. Then there exists a point not in $\cA$ on which there are more than $q/16 - 1$ skew lines to $\cA$.
	\end{lemma}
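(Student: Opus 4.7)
The plan is to carry out a double-counting argument: first compute the number of lines skew to the whole union $\cA$, then invoke pigeonhole over the points outside $\cA$. Since $\cK_1$ and $\cK_2$ are maximal $(3q+4,4)$-arcs, every line meets each $\cK_i$ in either $0$ or $4$ points, so every line has one of four types $(a,b)$ with $a=|\ell\cap\cK_1|$, $b=|\ell\cap\cK_2|$, both in $\{0,4\}$. Let $n_{ab}$ denote the number of lines of type $(a,b)$; the key quantity is $n_{00}$, the number of lines skew to $\cA$.

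To compute $n_{00}$, I will first use the standard count of $n$-secants of a maximal $(k,n)$-arc: each $\cK_i$ has $(3q+4)(q+1)/4$ four-secants, so $n_{40}+n_{44}=n_{04}+n_{44}=(3q+4)(q+1)/4$. For $n_{44}$, count ordered pairs in $\cK_1\times\cK_2$: there are $(3q+4)^2$ such pairs, each spans a unique line (the arcs are disjoint), and every line of type $(4,4)$ accounts for exactly $16$ of them, giving $n_{44}=(3q+4)^2/16$. A short calculation then yields $n_{00}=q(q-16)/16$.

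Next, every skew line to $\cA$ consists of $q+1$ points, all of which lie outside $\cA$; since $|\cA|=6q+8$, the number of points outside $\cA$ is $q^2-5q-7$. Double-counting incidences (point outside $\cA$, skew line through it) gives $(q+1)n_{00}$ in total, so by pigeonhole some point outside $\cA$ lies on at least
\[
\frac{(q+1)\,q\,(q-16)}{16\,(q^2-5q-7)}
\]
skew lines to $\cA$. To finish, I will show this exceeds $q/16-1=(q-16)/16$: after cancelling the factor $(q-16)/16$ (positive for $q>16$), the inequality reduces to $(q+1)q>q^2-5q-7$, i.e., $6q+7>0$, which is trivial.

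I do not expect any real obstacle. The argument is a clean double count, and the only care needed is arithmetic bookkeeping so that the target bound $q/16-1$ emerges naturally from $n_{00}$; the factor $(q-16)$ in the numerator is precisely what makes the final inequality tight in form but easy in substance.
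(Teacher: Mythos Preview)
Your proposal is correct and follows essentially the same approach as the paper: compute the number of skew lines to $\cA$ (you do it via the four line-types and a bilinear count for $n_{44}$, the paper does it by counting $4$- and $8$-secants through a point of $\cA$; both arrive at $n_{00}=q(q-16)/16$), then average over the $q^2-5q-7$ points outside $\cA$ and invoke pigeonhole. Your final simplification---cancelling the common factor $(q-16)/16$ to reduce to $6q+7>0$---is cleaner than the paper's explicit expansion of the average, and both arguments yield the strict inequality only for $q>16$, which is all that is needed downstream.
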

	\begin{proof}
		Take an arbitrary point $P\in\cA$. Without loss of generality, we may assume that $P\in\cK_1$. Then every line through $P$ are $4$-secants to $\cK_1$, and $(3q+4)/4=\frac34q+1$ lines through $P$ are $4$-secants to $\cK_2$. Hence the number of $4$-secants and $8$-secants to $\cA$ through any point of $\cA$ is $\frac{q}{4}$ and $\frac34q+1$, respectively. Thus the number of skew lines to $\cA$ is  $q^2+q+1-\left(\frac14|\cA|\cdot\frac{q}{4} + \frac18|\cA|\cdot\left(\frac34q+1\right)\right)=\frac{1}{16}q^2-q$. (Note that this must be non-negative, hence $q\geq 16$ follows.) Thus the average number of skew lines on a point not in $\cA$ is \[\frac{\left(\frac{1}{16}q^2-q\right)(q+1)}{q^2+q+1-|\cA|} = \frac{q}{16}-\frac58 -\frac{\left(\frac{59q}{16}+\frac{35}{8}\right)}{q^2-5q-7}.\]
		This quantity is $0$ for $q=16$ and strictly larger than $q/16-1$ if $q>16$.
	\end{proof}

	\begin{const}\label{2maxarcconst}
		Suppose that in $\Pi_q$, there are two disjoint $(3q+4,4)$-arcs, and let $\cA$ be their union. Choose a point $P\notin \cA$, let $w$ denote the number of skew lines to $\cA$ through $P$, and denote these lines by $\ell_1,\ldots,\ell_w$. Let $0\leq r\leq w-1$. Let $\ell$ be another skew line through $P$ (Option 1), or let $\ell$ be a $4$-secant or $8$-secant to $\cA$ through $P$ (Option 2). Let $X\subset \ell\setminus(\{P\}\cup\cA)$ be a set of $t$ points, where $0\leq t\leq q$ in case of Option 1, and $0\leq t\leq q-8$ in case of Option 2. Let 
		\[\cS=\cA\cup \bigcup_{i=1}^r(\ell_i\setminus\{P\}) \cup X.\]
	\end{const}
	
	\begin{prop}\label{2maxarcprop}
		The set $\cS$ in Construction \ref{2maxarcconst} has $(r+6)q+8+t$ points, and for each line $\ell$, $|\ell\cap \cS|\in\{0,4,8, t,q,r,r+1,r+4,r+5,r+8,r+9\}$ in case of Option 1, and $|\ell\cap \cS|\in\{0,4,8, t+\varepsilon 4,q,r,r+1,r+4,r+5,r+8,r+9\}$, $\varepsilon\in\{1,2\}$ in case of Option 2. In particular, if $1\neq r\geq q-7$, then at least one of the Options for $\cS$ yields an $(r+7)$-avoiding set.
	\end{prop}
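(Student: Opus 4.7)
The plan is to perform a case analysis on a generic line $\ell'$ of $\Pi_q$, splitting according to whether $P\in\ell'$ and to how $\ell'$ relates to the special lines $\ell_1,\dots,\ell_r,\ell$ and to $\cA$. First note that the three pieces in the defining union of $\cS$ are pairwise disjoint: $\cK_1$ and $\cK_2$ are disjoint by hypothesis; the sets $\ell_i\setminus\{P\}$ are pairwise disjoint (distinct lines through $P$ meet only in $P$) and each is skew to $\cA$; and $X\subset\ell\setminus(\{P\}\cup\cA)$ is disjoint from $\cA$ and from every $\ell_i\setminus\{P\}$ (their intersection is $\{P\}\notin X$). Hence $|\cS|=(6q+8)+rq+t=(r+6)q+8+t$.

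For a line $\ell'$ through $P$: if $\ell'=\ell_i$ with $i\le r$, then $\ell'\cap\cS=\ell_i\setminus\{P\}$ gives $|\ell'\cap\cS|=q$; if $\ell'=\ell$, then under Option~1 $\ell'\cap\cS=X$ has size $t$, and under Option~2, with $\ell$ a $4$- or $8$-secant to $\cA$, $\ell'\cap\cS=(\ell\cap\cA)\sqcup X$ has size $t+\varepsilon\cdot 4$ for some $\varepsilon\in\{1,2\}$; for any other line through $P$, $\ell'\cap\cS=\ell'\cap\cA\in\{0,4,8\}$, since every line meets each $(3q+4,4)$-arc $\cK_i$ in $0$ or $4$ points. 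For $\ell'$ not through $P$: writing $a=|\ell'\cap\cA|\in\{0,4,8\}$, observe that $\ell'$ meets each $\ell_i$ at a unique point $Q_i\neq P$ lying in $\ell_i\setminus\{P\}\subset\cS$, and these $r$ points are disjoint from $\ell'\cap\cA$ because each $\ell_i$ is skew to $\cA$. Finally $\ell'\cap\ell$ is one point $R$, contributing an extra $+1$ iff $R\in X$ (any $R\in\cA$ occurring in Option~2 has already been counted in $a$). Thus $|\ell'\cap\cS|=a+r+\delta\in\{r,r+1,r+4,r+5,r+8,r+9\}$ with $\delta\in\{0,1\}$. Combining the two cases produces the two lists stated in the proposition.

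For the last assertion, the $t$-independent intersection sizes lie in $\{0,4,8,q,r,r+1,r+4,r+5,r+8,r+9\}$. Since $r+7$ automatically differs from the six $r$-shifted entries, the only obstruction is $r+7\in\{0,4,8,q\}$, which is excluded precisely when $r\neq 1$ and $r\neq q-7$. Under this hypothesis the $t$-dependent value equals $t$ (Option~1) or $t+\varepsilon\cdot 4$ with a chosen $\varepsilon\in\{1,2\}$ (Option~2); as $t$, $t+4$ and $t+8$ cannot all equal $r+7$ simultaneously, at least one of the options yields an $(r+7)$-avoiding $\cS$. The main bookkeeping obstacle sits in the case $P\notin\ell'$: one must guarantee that the point $\ell'\cap\ell$, which in Option~2 may lie in $\cA$, is never silently double-counted, and that $\ell'\cap\ell_i$ never secretly coincides with a point of $\cA$; both issues are controlled by the skewness of each $\ell_i$ to $\cA$ and by the requirement $X\cap\cA=\emptyset$ built into the construction.
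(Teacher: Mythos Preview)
Your proof is correct and follows the same case analysis as the paper's own argument (splitting on whether $P\in\ell'$). You are in fact more thorough: the paper's proof omits the ``in particular'' assertion entirely, whereas you supply it, and you also make explicit the disjointness and double-counting checks (e.g.\ that $\ell'\cap\ell_i\notin\cA$ and that in Option~2 a point $\ell'\cap\ell\in\cA$ is already counted in $a$) that the paper leaves implicit. The printed hypothesis ``$1\neq r\geq q-7$'' is evidently a typo for $r\notin\{1,q-7\}$, which is exactly the condition your argument isolates.
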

	\begin{proof}
		The size of $\cS$ is self evident. Let $\ell$ be any line. If $P\in\ell$, then $|\ell\cap\cS|\in\{0,4,8,q,t\}$ (Option 1) or $|\ell\cap\cS|\in\{0,4,8,q,t+\varepsilon 4\}$ (Option 2, $\varepsilon\in\{1,2\}$). If $P\notin\ell$, then $\ell$ intersects $\ell_1\ldots,\ell_r$ in a point of $\cS\setminus\cA$, it may or may not contain a point of $\cS$ on $\ell$, and further $0$, $4$ or $8$ points from $\cA$; hence $|\ell\cap\cS|\in\{r,r+1,r+4,r+5,r+8,r+9\}$. 
	\end{proof}

	\begin{theorem}\label{2maxarcthm}
		If there are two disjoint $(3q+4,4)$-arcs in $\PG(2,q)$, $q\geq 32$, then there exists a $k$-avoiding set in $\PG(2,q)$ of size $m$ for all $7\leq k\leq q/16+6$, $k\neq 8$, and $m\in[(k-1)q+8,kq+8]$.
	\end{theorem}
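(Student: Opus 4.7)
The plan is to instantiate Construction \ref{2maxarcconst} with the parameter choice $r=k-7$ and then sweep the full window $[(k-1)q+8,kq+8]$ by varying $t\in\{0,\dots,q\}$, switching from Option~1 to Option~2 at the single value of $t$ where a collision would occur. Let $\cA$ be the union of the two disjoint $(3q+4,4)$-arcs. Since such maximal arcs force $q=2^h$, the quantity $q/16$ is a positive integer once $q\geq 32$. Lemma \ref{2maxarclemma} then supplies a point $P\notin\cA$ through which $w>q/16-1$, hence $w\geq q/16$, skew lines to $\cA$ pass. The parameter constraint $0\leq r\leq w-1$ in Construction \ref{2maxarcconst} becomes $k-7\leq q/16-1$, which is precisely the hypothesis $k\leq q/16+6$, so the construction is available.

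For a target size $m\in[(k-1)q+8,kq+8]$ I would write $m=(k-1)q+8+t$ with $t\in\{0,1,\dots,q\}$. By Proposition \ref{2maxarcprop}, the set $\cS$ obtained from Construction \ref{2maxarcconst} with $r=k-7$ has size $(r+6)q+8+t=m$, and the type of $\cS$ is contained in
\[\{0,4,8,q\}\cup\{r,r+1,r+4,r+5,r+8,r+9\}\cup(\text{contribution from }\ell).\]
Substituting $r=k-7$ turns the middle set into $\{k-7,k-6,k-3,k-2,k+1,k+2\}$, which avoids $k$; the values $0,4$ avoid $k$ because $k\geq 7$, the value $8$ because $k\neq 8$, and $q$ because $k\leq q/16+6<q$.

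It remains to control the free contribution from $\ell$, which is $\{t\}$ for Option~1 and $\{t+4\}$ or $\{t+8\}$ for Option~2. If $t\neq k$ I would use Option~1. If $t=k$ I would switch to Option~2 and choose $\ell$ to be a $4$-secant, whose contribution $t+4=k+4$ avoids $k$; feasibility $t\leq q-4$ follows from $k\leq q/16+6\leq q-4$ for $q\geq 32$. This exhausts all $m\in[(k-1)q+8,kq+8]$ and produces a $k$-avoiding set of the prescribed size in each case.

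The main conceptual obstacle, and the reason for the exception $k\neq 8$, is that $\cA$ itself admits $8$-secants (lines cutting each component arc in four points), so the value $8$ is unavoidably present in the type of $\cS$; no instantiation of Construction \ref{2maxarcconst} can miss $8$-secants. Everything else reduces to the straightforward bookkeeping above.
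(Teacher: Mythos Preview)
Your proof is correct and follows essentially the same route as the paper's: set $r=k-7$, invoke Lemma~\ref{2maxarclemma} to locate a point $P\notin\cA$ with at least $q/16$ skew lines, run Construction~\ref{2maxarcconst}, and sweep $t$ from $0$ to $q$ while toggling between the two Options at the single collision $t=k$. Your bookkeeping is in fact cleaner than the paper's somewhat terse argument.

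One small point to tighten: when $t=k$ you switch to Option~2 and \emph{choose $\ell$ to be a $4$-secant}, but the existence of a $4$-secant to $\cA$ through $P$ is not guaranteed. Through $P\notin\cA$ each component arc $\cK_i$ has exactly $q/4$ skew lines; if these two skew-line sets happen to coincide (so $w=q/4$), then every non-skew line through $P$ is an $8$-secant and no $4$-secant is available. This is harmless for your argument: an $8$-secant through $P$ always exists (there are at least $q/2+1$ of them), and with $t=k$ the contribution $t+8=k+8\neq k$ works equally well, with the required bound $t=k\leq q/16+6\leq q-8$ holding for $q\geq 32$. Simply replacing ``choose $\ell$ to be a $4$-secant'' by ``choose $\ell$ to be a $4$- or $8$-secant'' closes the gap.
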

	\begin{proof}
		Let $\cA$ be the union of the two disjoint $(3q+4,4)$-arcs. By Lemma \ref{2maxarclemma}, there is a point $P\notin\cA$ with at least $q/16$ skew lines to $\cA$. Let $r=k-7$, and apply Construction \ref{2maxarcconst} to obtain the set $\cS$. If $r\neq 2$, by Proposition \ref{2maxarcprop}, we obtain a $k$-avoiding set of size $(k-1)q+8+t$ for all $0\leq t\leq q$ (note that for $t\geq q-7$, we may apply Option 1 as $t > k$ follows from $k\leq q/16+6$ and $q\geq 32$).
	\end{proof}
	
	Combining Theorems \ref{evenq/4thm} and \ref{2maxarcthm}, we get the following result, which almost completely closes the critical window.
	
	\begin{cor}\label{2maxarccor}
		If Conjecture \ref{2maxarcconj} holds and $q\geq 32$, then for every $7\leq k\leq q/16+6$, $k\neq 8$, and $m\in[(k-1)q+8,(k+1)q]$, there exists a $k$-avoiding set of size $m$ in $\PG(2,q)$. 
	\end{cor}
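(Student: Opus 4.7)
The plan is simply to combine the two preceding theorems so that their achievable intervals overlap and cover $[(k-1)q+8,(k+1)q]$ in full.

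Fix $q\geq 32$ even and $7\leq k\leq q/16+6$ with $k\neq 8$. Since Conjecture \ref{2maxarcconj} holds, $\PG(2,q)$ contains two disjoint $(3q+4,4)$-arcs, and $q\geq 32$, so Theorem \ref{2maxarcthm} directly supplies a $k$-avoiding set of size $m$ for every $m\in[(k-1)q+8,kq+8]$.

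For the upper half, I appeal to Theorem \ref{evenq/4thm}. The hypothesis $3\leq k\leq q/4+3$ is satisfied, since $q/16+6\leq q/4+3$ is equivalent to $q\geq 16$. Thus, apart from the two exceptions listed in Theorem \ref{evenq/4thm}, there exists a $k$-avoiding set of size $m$ for every $m\in[kq+4,(k+1)q]$. The first exception, $k=4$, is excluded by $k\geq 7$. The second, $k=q/4+3$, would combined with $k\leq q/16+6$ force $3q/16\leq 3$, i.e.\ $q\leq 16$, contradicting $q\geq 32$. Hence the full interval $[kq+4,(k+1)q]$ is available.

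Finally, since $kq+4\leq kq+8$, the two intervals $[(k-1)q+8,kq+8]$ and $[kq+4,(k+1)q]$ overlap, and their union is exactly $[(k-1)q+8,(k+1)q]$. This completes the proof.
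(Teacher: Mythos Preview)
Your proof is correct and follows exactly the approach indicated in the paper, which simply states that the corollary is obtained by combining Theorems \ref{evenq/4thm} and \ref{2maxarcthm}. You have carefully verified all the hypotheses of both theorems and checked that their two intervals overlap, which is precisely what is needed.
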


	\subsection{Construction for $q$ square via Baer subplanes}
	
	The following construction relies on Baer subplanes; that is, subplanes of order $\sqrt{q}$. A Baer subplane has $q+\sqrt{q}+1$ points, and each line intersects it in either one or $\sqrt{q}+1$ points. It is well-known that the point set of $\PG(2,q)$, $q$ square, can be partitioned into (the point sets of) $q-\sqrt{q}+1$ disjoint Baer subplanes. If we take $s$ pairwise disjoint Baer subplanes, then for any line $\ell$, either $\ell$ intersects each of the $s$ Baer subplanes in exactly one point, or $\ell$ intersects precisely one of them in $\sqrt{q}+1$ points and each one of the rest in exactly one point. Hence the union of $s$ disjoint Baer subplanes is a set of type $(s,\sqrt{q}+s)$ of size $s(q+\sqrt{q}+1)$. Through any point $P$ not in the union, there are exactly $s$ distinct $(\sqrt{q}+s)$-secants, the remaining $q+1-s$ lines are $s$-secants.
	
	
	\begin{const}\label{Baer1const}
		Let $q$ be a square prime power.
		Take the point set $\mathcal{A}$ of $s$ disjoint Baer subplanes of $\PG(2,q)$ ($1\leq s\leq q-\sqrt{q}$) and a point $P$ outside the Baer subplanes. Choose $r$ lines $\ell_1, \ell_2, \ldots \ell_{r}$ through $P$ so that each of them intersects $\cA$ in $s$ points, and let $\ell_{r+1}$ be a line through $P$ intersecting $\cA$ in either $s$ points (Option~1) or $\sqrt{q}+s$ points (Option 2), where $0\leq r\leq q-s$.\\ 
		Let $H$ be a $t$-set from $\ell_{r+1}\setminus \mathcal{A}$ ($0\leq t\leq q-s$ in Option 1 and $0\leq t\leq q-\sqrt{q}-s$ in Option 2) and let 
		\[\cS=\mathcal{A}\cup \bigcup_{i=1}^r (\ell_i\setminus\{P\}) \cup H.\]
	\end{const}
	
	\begin{prop}\label{Baer1prop}
		The set $\cS$ in Construction \ref{Baer1const} has $s(q+\sqrt{q}+1)+r(q-s)+t$ points, and each line different from $\ell_{r+1}$ intersects $\cS$ in at most $s+r+1$ or at least $s+\sqrt{q}$ points, and $\ell_{r+1}$ is an $(s+t)$-secant (Option 1) or an $(s+\sqrt{q}+t)$-secant (Option 2).
	\end{prop}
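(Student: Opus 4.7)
The plan is to verify both the size formula and the line-intersection pattern by a direct case analysis, tracking how the three building blocks of $\cS$ interact. A preliminary but useful observation is that the stated range $0 \le t \le q-s$ (resp.\ $q-\sqrt{q}-s$) in Option~1 (resp.\ Option~2) is exactly $|\ell_{r+1} \setminus (\cA \cup \{P\})|$, so one may assume $P \notin H$; this is the only mild subtlety in the setup.

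First, I would show $\cS$ decomposes as the disjoint union $\cA \sqcup B \sqcup H$, where $B := \bigcup_{i=1}^r (\ell_i \setminus \{P\}) \setminus \cA$. The $r$ distinct lines $\ell_i$ meet pairwise only at $P$, so the sets $\ell_i \setminus \{P\}$ are disjoint; since each $\ell_i$ contains $s$ points of $\cA$ and $P \notin \cA$, we get $|B| = r(q-s)$. The set $H$ is disjoint from $\cA$ by choice, and disjoint from $B$ because $\ell_{r+1}$ meets each $\ell_i$ only at $P$, which is not in $H$. Summing cardinalities yields $|\cS| = s(q+\sqrt{q}+1) + r(q-s) + t$.

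Next, for the line intersections I split a generic line $\ell$ into four cases. (i) If $\ell = \ell_i$ for some $i \le r$, then $\ell \setminus \{P\} \subset \cS$ and $P \notin \cS$, so $|\ell \cap \cS| = q \ge s + \sqrt{q}$ using $s \le q - \sqrt{q}$. (ii) If $\ell$ passes through $P$ but is none of $\ell_1, \ldots, \ell_{r+1}$, then $\ell$ misses $B$ and $H$, so $|\ell \cap \cS| = |\ell \cap \cA| \in \{s, \sqrt{q}+s\}$ by the standard type of a union of disjoint Baer subplanes. (iii) If $\ell = \ell_{r+1}$, then $\ell \cap \cS = (\ell \cap \cA) \sqcup H$, giving an $(s+t)$-secant in Option~1 and an $(\sqrt{q}+s+t)$-secant in Option~2.

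The remaining and most involved case is (iv): $P \notin \ell$. Here $\ell$ meets each of $\ell_1, \ldots, \ell_{r+1}$ in exactly one point, and these $r+1$ points are distinct. Set $a := |\ell \cap \cA| \in \{s, \sqrt{q}+s\}$, let $j$ be the number of $i \in \{1, \ldots, r\}$ with $\ell \cap \ell_i \in \cA$, and let $\varepsilon := |\ell \cap H| \in \{0,1\}$; then $|\ell \cap B| = r-j$ and $|\ell \cap \cS| = a + (r-j) + \varepsilon$. If $a = s$, the trivial bound $j \ge 0$ gives $|\ell \cap \cS| \le s + r + 1$; if $a = \sqrt{q}+s$, the trivial bound $j \le r$ gives $|\ell \cap \cS| \ge \sqrt{q}+s$. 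No real obstacle arises beyond bookkeeping; the entire argument rests on partitioning $\cS$ into three disjoint pieces and using that $\cA$ is itself a set of type $(s, \sqrt{q}+s)$.
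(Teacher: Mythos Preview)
Your argument is correct and follows essentially the same direct case analysis as the paper. The only organizational difference is that the paper splits first according to whether $\ell$ is a $(\sqrt{q}+1)$-secant to one of the Baer subplanes (immediately giving $|\ell\cap\cS|\ge s+\sqrt{q}$ in that case) and only then distinguishes $P\in\ell$ from $P\notin\ell$, whereas you split first by incidence with $P$; both routes amount to the same bookkeeping, and your version is somewhat more explicit about the disjointness of $\cA$, $B$, and $H$.
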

	\begin{proof}
		The size of $\cS$ is self evident. Let $\ell$ be a line. Suppose first that $\ell$ intersects one of the Baer subplanes in $\cS$ in $\sqrt{q}+1$ ponits. Then, as $\ell$ intersects each of the other $s-1$ Baer subplanes in $1$ point, $|\ell\cap\cS|\geq s+\sqrt{q}$. Suppose now that $\ell$ intersects every Baer subplane of $\cS$ in $1$ point.
		If $P\notin\ell$, then $\ell$ contains at most further $r+1$ points of $\cS$ from the lines $\ell_1,\ldots,\ell_{r+1}$, and thus $|\ell\cap \cS|\leq s+r+1$. If $P\in\ell$, then $|\ell\cap\cS|\in\{q,s+t,s\}$ (Option 1) or $|\ell\cap\cS|\in\{q,s+\sqrt{q}+t,s\}$ (Option 2).
	\end{proof}
	
	Let us formulate the direct consequence of the above construction in a lemma.
	
	\begin{lemma}\label{lemmaBaer}
		Suppose that $q$ is a square prime power. Let $2\leq k\leq q-1$, and let $s,r,t$ be integers such that $1\leq s\leq q-\sqrt{q}$, $0\leq r\leq q-s$, $s+r+2\leq k\leq s+\sqrt{q}-1$, and $0\leq t\leq q-s$, $t\neq k-s$. Then there exists a $k$-avoiding set of points in $\PG(2,q)$ of size $(k-1)q+\delta_k(s,r,t)$, where
		\[\delta_k(s,r,t)=s\sqrt{q}-(k-s-r-1)q-s(r-1)+t.\]
		Moreover, if $k\leq q-\sqrt{q}$, then the condition $t\neq k-s$ may be dropped.
	\end{lemma}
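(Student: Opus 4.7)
The plan is to apply Construction \ref{Baer1const} with the given parameters $s$, $r$, $t$ and read off both the size and the $k$-avoidance from Proposition \ref{Baer1prop}. First I would verify the size formula: expanding $s(q+\sqrt{q}+1)+r(q-s)+t$ and $(k-1)q+\delta_k(s,r,t)$ both yield $(s+r)q+s\sqrt{q}+s-sr+t$, so this is a routine algebraic identity.

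The key step is the $k$-avoidance. Proposition \ref{Baer1prop} shows that every line different from $\ell_{r+1}$ meets $\cS$ in at most $s+r+1$ or at least $s+\sqrt{q}$ points, and the hypothesis $s+r+2 \leq k \leq s+\sqrt{q}-1$ places $k$ strictly in that gap, so none of those lines is a $k$-secant. The only potentially troublesome line is $\ell_{r+1}$: under Option~1 it is an $(s+t)$-secant, which is a $k$-secant exactly when $t=k-s$; under Option~2 it is an $(s+\sqrt{q}+t)$-secant, and since $t\geq 0$ and $k \leq s+\sqrt{q}-1$ we have $s+\sqrt{q}+t \geq s+\sqrt{q} > k$, so it is always safe. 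Thus Option~1 already yields a $k$-avoiding set of the claimed size for every $t\in[0,q-s]$ with $t\neq k-s$, establishing the first assertion.

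For the moreover claim I would handle the single excluded value $t=k-s$ by switching to Option~2. The hypothesis $k\leq q-\sqrt{q}$ gives $k-s\leq q-\sqrt{q}-s$, so $t=k-s$ lies in Option~2's admissible range $[0,q-\sqrt{q}-s]$, and Option~2 imposes no restriction beyond this range. Using Option~1 when $t\neq k-s$ and Option~2 when $t=k-s$ therefore covers the full range $t\in[0,q-s]$ without the exclusion. The only real obstacle is bookkeeping rather than anything conceptual: one must check that $P$ admits the required $r$ lines through it that are $s$-secants to $\cA$ and, in Option~2, at least one $(\sqrt{q}+s)$-secant, which both follow from the standard count for disjoint Baer subplanes recalled just before Construction~\ref{Baer1const}.
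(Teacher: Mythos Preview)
Your proposal is correct and follows essentially the same approach as the paper's own proof: apply Construction~\ref{Baer1const} with Option~1, use Proposition~\ref{Baer1prop} together with the hypothesis $s+r+2\leq k\leq s+\sqrt{q}-1$ to verify $k$-avoidance, and switch to Option~2 for the single value $t=k-s$ when $k\leq q-\sqrt{q}$. Your write-up is in fact a bit more explicit than the paper's, spelling out why Option~2 is always safe on $\ell_{r+1}$ and noting the line-count bookkeeping for $P$.
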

	\begin{proof}
		Apply Option 1 of Construction \ref{Baer1const} with the parameters $s,r$ and $t$ chosen as in the assumptions of the present Lemma  ($t\neq k-s$) to obtain the point set $\cS$. Because of the choice of $s$, $r$ and $t$, $\cS$ avoids $k$-secants. If $t=k-s$ and $k\leq q-\sqrt{q}$, then $t\leq q-\sqrt{q}-s$ holds, thus we may choose Option 2 to obtain a set of the same size avoiding $k$-secants. Furthermore, $|\cS|-(k-1)q=s(q+\sqrt{q}+1)+r(q-s)+t -(k-1)q = s\sqrt{q} - (k-1-s-r)q-s(r-1)+t$, as stated.
	\end{proof}
	
	\begin{theorem}\label{baer1thm}
		Let $q$ be a square, $3\leq k\leq q-1$, $m\in[(k-1)q+2,(k-1)q+(k-2)\sqrt{q}]$. Then there exists a $k$-avoiding set in $\PG(2,q)$ of size $m$.
	\end{theorem}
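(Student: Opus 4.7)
The plan is to apply Lemma \ref{lemmaBaer} with carefully varying parameters $s, r, t$ so that every size in the target interval is realized. Writing $m = (k-1)q + \delta$ with $\delta \in [2, (k-2)\sqrt{q}]$, the task reduces to finding $(s, r, t)$ with $\delta_k(s, r, t) = \delta$.

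For each fixed $s \in [\max(1, k - \sqrt{q}+1),\, k-2]$, as $r$ ranges over $\{0, 1, \ldots, k-s-2\}$ and $t$ over $\{0, 1, \ldots, q-s\}$ (using Option~2 of Construction~\ref{Baer1const} to patch the $t \neq k-s$ exclusion when $k \leq q - \sqrt{q}$), the achievable values $\delta_k(s, r, t) = s\sqrt{q} - (k-s-r-1)q - s(r-1) + t$ form a single contiguous interval
\[
I_s = \bigl[\,s(\sqrt{q}+1) - (k-s-1)q,\; s(\sqrt{q} + s + 2 - k)\,\bigr],
\]
because incrementing $r$ by $1$ shifts $\delta_k$ upward by exactly $q-s$, matching the width of the $t$-range. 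Next I would show $\bigcup_s I_s \supseteq [2, (k-2)\sqrt{q}]$. The top endpoint equals the upper end of $I_{k-2}$; writing $j = k - s$, a direct computation shows that consecutive intervals $I_{s-1}$ and $I_s$ overlap iff $(j-1)q + j - 1 \geq j(k-j) + \sqrt{q}$, whose tightest instance ($j=2$) reduces to $k \leq (q-\sqrt{q}+5)/2$. A separate calculation verifies that the lower endpoint of $I_{s_{\min}}$ does not exceed $2$, closing the bottom of the interval.

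The main obstacle is the regime where $k$ is close to $q - 1$: there the overlap between topmost intervals $I_{k-2}$ and $I_{k-3}$ fails, the union $\bigcup_s I_s$ develops gaps, and Option~2 no longer patches the $t=k-s$ holes (since this requires $k \leq q-\sqrt{q}$). In this regime, however, the target interval $[(k-1)q+2,\, (k-1)q+(k-2)\sqrt{q}]$ extends past $q^2+q+1$ and the effective target truncates accordingly. The plan in this regime is to supplement the Baer construction with Proposition~\ref{trivikonst} at the top (which already gives $[(k+1)q, q^2+q+1] \subset \Sp(k, \Pi_q)$) and with the complementation observation $\Sp(k, \Pi_q) = \{q^2+q+1-m : m \in \Sp(q+1-k, \Pi_q)\}$, recasting the high-$k$ range into a small-$k'$ problem already addressed by earlier constructions in the paper.
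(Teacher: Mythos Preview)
Your approach is the paper's: apply Lemma~\ref{lemmaBaer} and show that the values $\delta_k(s,r,t)$ sweep $[2,(k-2)\sqrt{q}]$ by checking that the $\delta$-intervals for consecutive $s$ overlap. The paper restricts to $r\in\{0,1,2\}$ rather than the full range $0\le r\le k-s-2$, but this is cosmetic --- the binding overlap is between $s=k-3$ and $s=k-2$, where only $r\le 1$ and $r=0$ are admissible anyway, so both analyses hit the same constraint (your $j=2$ case).

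The one real difference is the treatment of large $k$. The paper opens with ``As usual, we may assume that $k\le (q+1)/2$. Note that $k\le q-\sqrt{q}$ follows.'' This single reduction handles both of your obstacles at once: Option~2 is always available to patch the $t=k-s$ exclusion, and the overlap inequality sits in the intended range. You instead defer this to an end-of-proof patch via Proposition~\ref{trivikonst} and complementation into ``earlier constructions'', which is vague and does not straightforwardly recover the full stated interval: complementing the result for $k'=q+1-k$ yields $k$-avoiding sets only of sizes in $[(k+1)q+1-(k'-2)\sqrt{q},\,(k+1)q-1]$, leaving the lower part of the critical window uncovered whenever $k'$ is small. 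The clean fix is to do what the paper does --- invoke complementation \emph{first}, reduce to $k\le (q+1)/2$, and then run the direct construction.
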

	\begin{proof}
		As usual, we may assume that $k\leq (q+1)/2$. Note that $k\leq q-\sqrt{q}$ follows. By Lemma \ref{lemmaBaer}, it suffices to show that the values $\delta_k(s,r,t)$ can take all integer values of the interval $[2,(k-2)\sqrt{q}]$. Note that $\delta_k(s,r,t)$ is increasing in $s$, $r$ and $t$ as well.
		
		Let $I_k[s,r]$ denote the interval $[\delta_k(s,r,0),\delta_k(s,r,q-s)]$. If $r$ and $s$ are appropriate for Lemma \ref{lemmaBaer}, then $\delta_k(r,s,t)$ can take each value of $I_k[s,r]$. Thus it is enough to show that the intervals $I_k[s,r]$ cover the interval $[2,(k-2)\sqrt{q}]$. Let us restrict the values of $r$ to be used to $0\leq r\leq 2$.
		
		The parameters $s_0=\max\{1,k-\sqrt{q}+1\}$, $r_0=t_0=0$ and $s_1=k-2$, $r_1=0$, $t_1 = q-(k-2)$ are suitable for Lemma \ref{lemmaBaer}. Suppose first $k\geq \sqrt{q}$. Then we see that $\delta_k(k-\sqrt{q}+1,0,0) = k(\sqrt{q}+1)-q(\sqrt{q}-1)+1\leq 2$ by $k\leq(q+1)/2$. If $3\leq k\leq \sqrt{q-1}$, then $\delta_k(1,0,0) = \sqrt{q}-(k-2)q +1\leq 2$ also holds. On the other hand, we also see that $\delta_k(k-2,0,q-k+2)=(k-2)\sqrt{q}$. Thus both $2$ and $(k-2)\sqrt{q}$ are in the values covered by $\delta_k(r,s,t)$.
		
		Now observe that $\delta_k(s,r,q-s)=\delta_k(s,r+1,0)$ (thus $\cup_{r=0}^2 I_k[s,r]$ is an interval), furthermore 
		\begin{equation}
			\begin{split}
				\delta_k(s,2,q-s) &= s\sqrt{q}-(k-s-3)q-s+q-s =s\sqrt{q} - (k-s-2)q + 2q - 2s \\ &\geq  s\sqrt{q}+\sqrt{q}-(k-s-2)q + s + 1 =\delta_k(s+1,0,0),  
			\end{split}
		\end{equation}
		where we use that $2q-2s\geq\sqrt{q}+s+1$, which follows from $s\leq k-2 < q/2$. Thus indeed, \[\bigcup_{s=k-\sqrt{q}+1}^{k-2}\bigcup_{r=0}^2 I[s,r]\] covers $[2,(k-2)\sqrt{q}]$.  
	\end{proof}
	
	As a corollary, we obtain easily the following
	
	\begin{theorem}\label{BaerCor}
		Let $q\geq 25$ be a square prime power, $2\sqrt{q}+2\leq k \leq q-2\sqrt{q}-1$. Then there exists an $m$-set of points avoiding $k$-secants for all $m$ with $0\leq m\leq q^2+q+1$.
	\end{theorem}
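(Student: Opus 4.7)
The plan is to write $\Sp(k,q)=[0,q^2+q+1]$ as a union of four intervals and verify that they cover the whole range. Since the admissible range $[2\sqrt{q}+2,\,q-2\sqrt{q}-1]$ is symmetric around $(q+1)/2$ and the claim $\Sp(k,q)=[0,q^2+q+1]$ is invariant under $k\mapsto q+1-k$ (by the complementation observation from the introduction), we may assume throughout that $k\leq(q+1)/2$. Since $k\geq 2\sqrt{q}+2\geq 12>3$ holds for $q\geq 25$, Proposition \ref{trivikonst} is applicable.

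First, Proposition \ref{trivikonst} handles the sizes outside the critical window: $[0,(k-1)q+1]\cup[(k+1)q,q^2+q+1]\subset \Sp(k,q)$. Next, Theorem \ref{baer1thm} applied directly to $k$ contributes the lower portion of the critical window, $[(k-1)q+2,\,(k-1)q+(k-2)\sqrt{q}]\subset\Sp(k,q)$. For the upper portion, I would apply the same theorem to $k':=q+1-k\in[(q+1)/2,\,q-2\sqrt{q}-1]\subset[3,q-1]$ to obtain $[(k'-1)q+2,\,(k'-1)q+(k'-2)\sqrt{q}]\subset\Sp(k',q)$, and then transport this back to $k$ via the complementation observation; using the identities $q^2+q+1-(k'-1)q=(k+1)q+1$ and $k'-2=q-k-1$, this becomes $[(k+1)q+1-(q-k-1)\sqrt{q},\,(k+1)q-1]\subset\Sp(k,q)$.

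The last step is a short interval calculation showing that the four pieces cover $[0,q^2+q+1]$, and this is exactly where the hypotheses $k\geq 2\sqrt{q}+2$ and $k\leq q-2\sqrt{q}-1$ enter. The first gives $(k-2)\sqrt{q}\geq 2q$, so the lower Baer-subplane piece reaches up to $(k-1)q+(k-2)\sqrt{q}\geq(k+1)q$; the second gives $(q-k-1)\sqrt{q}\geq 2q$, so the upper Baer-subplane piece starts no later than $(k+1)q+1-2q=(k-1)q+1$. The two halves therefore overlap, and together with the two contributions of Proposition \ref{trivikonst} they cover all of $[0,q^2+q+1]$. The main (modest) obstacle is simply making sure the numerical bridging works out; the two hypotheses on $k$ are tight precisely to let each Baer-subplane window span the full critical window from its side.
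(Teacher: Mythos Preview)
Your proof is correct and follows the same strategy as the paper: use Proposition~\ref{trivikonst} outside the critical window and Theorem~\ref{baer1thm} inside it. The paper's argument is shorter, however, because the complementary Baer piece you construct is redundant. You yourself observe that $k\geq 2\sqrt{q}+2$ gives $(k-2)\sqrt{q}\geq 2q$, so the single direct application of Theorem~\ref{baer1thm} to $k$ already yields an interval reaching $(k-1)q+(k-2)\sqrt{q}\geq (k+1)q$, which together with Proposition~\ref{trivikonst} covers all of $[0,q^2+q+1]$. The paper does exactly this one step and stops; the upper bound $k\leq q-2\sqrt{q}-1$ is present only so that the symmetry reduction to $k\leq (q+1)/2$ is legitimate (i.e., so that $q+1-k$ also satisfies the lower bound), not because a second Baer interval is needed.
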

	\begin{proof}
		Due to Proposition \ref{trivikonst}, it is enough to verify that for $(k-1)q+2\leq m \leq (k+1)q-1$, there is an $m$-set avoiding $k$-secants. By Lemma \ref{lemmaBaer}, this holds if $(k-1)q+2\leq m \leq (k-1)q+(k-2)\sqrt{q}$. As $k\geq 2\sqrt{q}+2$, $(k-2)\sqrt{q}\geq 2q-1$, and thus the statement follows.    
	\end{proof}

	\begin{remark}
		Taking an appropriate subset of the union of some disjoint Baer subplanes, we can constuct further $k$-avoiding sets. With the help of these, the result of Theorem \ref{BaerCor} can be extended to $2\sqrt{q}\leq k\leq q-2\sqrt{q}+1$.
	\end{remark}

	\subsection{Construction with lines}
	
	The next construction may be regarded as a generalisation of Construction \ref{triviconst}: we take the union of the point sets of some lines, but here the lines need not to be concurrent; then we may remove some points from the points and, under particular conditions, we can also add some sets of collinear set to let the size vary. For example, the upcoming Construction \ref{linesconst} includes three lines in general position with their intersection points removed ($r=3$, $s=0$,  $H=\emptyset$), known as the vertexless triangle, which is a $2$-avoiding set of size $3(q-1)$.
	
	\begin{const}\label{linesconst}
		Take a set $\cL$ of $2\leq r< q/2-1$ lines in $\Pi_q$, and let $0\leq s\leq r-2$ arbitrary. Let $\cP_i$ be the set of points covered by exactly $i$ lines of $\cL$, and let $\cP=\cup_{i=1}^r\cP_i$ be the set of points covered by the lines in $\cL$. Suppose that $\cP_i=\emptyset$ for all $2\leq i\leq s+1$ (note that this condition requires nothing if $s=0$), and let $H\subset \cup_{i=s+3}^{r}$ be arbitrary. For every $1\leq i\leq s$, let $\ell_i\notin\cL$ and let $X_i\subset(\cP_0\cap\ell_i)$. Assume that $X_1,\ldots, X_s$ are pairwise disjoint, and let $X=\cup_{i=1}^s X_i$. Let 
		\[\cS=\left(\cP\setminus(\cP_{s+2}\cup H)\right) \cup X.\]
	\end{const}
	
	\begin{prop}\label{linesprop}
		Consider Construction \ref{linesconst}, and let $\ell$ be an arbitrary line. Then 
		\begin{equation}\label{ellcount}
			|\ell\cap\cP|= r - \sum_{i=1}^r (i-1)|\ell\cap\cP_i|.   
		\end{equation}
		If $\ell\notin\{\ell_1,\ldots,\ell_s\}$, then $|\cS\cap\ell|\neq r-1$. 
	\end{prop}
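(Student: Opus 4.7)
My plan is to establish the identity (\ref{ellcount}) by a double-counting argument and then decompose
\[
\cS \;=\; \cP_1 \;\cup\; \bigcup_{i=s+3}^{r}(\cP_i \setminus H) \;\cup\; X
\]
(using that $\cP_i = \emptyset$ for $2\leq i\leq s+1$, that $\cP_{s+2}$ is removed, and that $H \subset \bigcup_{i\geq s+3}\cP_i$), splitting the second assertion into the cases $\ell\in\cL$ and $\ell\notin\cL$.

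For (\ref{ellcount}), I would count the pairs $(P,\ell')$ with $\ell'\in\cL$ and $P=\ell\cap\ell'$. When $\ell\notin\cL$, each $\ell'\in\cL$ meets $\ell$ in exactly one point, so the total is $r$, while summing over $P\in\ell\cap\cP$ yields $\sum_i i\cdot|\ell\cap\cP_i|$; rearranging gives (\ref{ellcount}). The analogous double count for $\ell\in\cL$ (dropping the pair with $\ell'=\ell$) produces the auxiliary identity $\sum_i (i-1)|\ell\cap\cP_i| = r-1$, which will be needed below.

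Set $a_i := |\ell\cap\cP_i|$ and $h_i := |\ell\cap H\cap\cP_i|$. If $\ell\in\cL$, then $X\cap\ell = \emptyset$ since $X\subset\cP_0$, so
\[
|\cS\cap\ell| \;\geq\; |\cP_1\cap\ell| \;=\; q+1 - \sum_{i\geq s+2}a_i \;\geq\; q+1 - \tfrac{r-1}{s+1} \;>\; r-1,
\]
where the middle inequality uses $(s+1)\sum_{i\geq s+2}a_i \leq \sum_{i\geq s+2}(i-1)a_i = r-1$ and the last uses $r<q/2-1$. If instead $\ell\notin\cL$, then $M := |X\cap\ell| \leq s$ because each $X_i\subset\ell_i$ contributes at most one intersection point (and $\ell\neq\ell_i$ by hypothesis), and a short calculation using (\ref{ellcount}) rewrites
\[
|\cS\cap\ell| \;=\; r - N + M, \qquad N \;:=\; (s+2)a_{s+2} + \sum_{i\geq s+3}\bigl[(i-1)a_i + h_i\bigr].
\]

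The key observation is the dichotomy $N\in\{0\}\cup[s+2,\infty)$: either all $a_i$ with $i\geq s+2$ vanish, in which case each $h_i=0$ as well (since $h_i\leq a_i$) and $N=0$; or some such $a_i$ is positive, contributing at least $s+2$ to $N$ and dominating any accompanying $h_j$ term. Combined with $0\leq M\leq s$, this forces $N-M\in(-\infty,0]\cup[2,\infty)$, so $N-M\neq 1$ and hence $|\cS\cap\ell|\neq r-1$. The main delicate step is precisely this dichotomy — an $h_i$ contribution alone is only of size $1$, which could threaten the bound, but since any positive $h_i$ drags along a $(i-1)a_i\geq s+2$ contribution, the threshold $s+2$ survives intact.
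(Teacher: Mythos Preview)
Your proof is correct and in spirit follows the same two–case logic as the paper's argument (either every intersection of $\ell$ with a line of $\cL$ lies in $\cP_1$, forcing $|\cS\cap\ell|\geq r$, or $\ell$ hits some $\cP_j$ with $j\geq s+2$, forcing $|\cS\cap\ell|\leq r-2$). Your explicit decomposition $|\cS\cap\ell|=r-N+M$ together with the dichotomy $N\in\{0\}\cup[s+2,\infty)$ is simply a clean bookkeeping of that same case split.

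Where your write-up is genuinely more careful is the case $\ell\in\cL$. The paper's argument tacitly relies on \eqref{ellcount}, but that identity fails when $\ell\in\cL$ (indeed, then $|\ell\cap\cP|=q+1$ while the right-hand side equals $1$). The paper's ``second case'' sentence, which invokes \eqref{ellcount} to bound $|\ell\cap\cP|\leq r-(j-1)$, therefore does not literally cover $\ell\in\cL$. You handle this separately and correctly: from the auxiliary identity $\sum_i(i-1)a_i=r-1$ you get $\sum_{i\geq s+2}a_i\leq (r-1)/(s+1)$, whence $|\cS\cap\ell|\geq |\cP_1\cap\ell|\geq q+1-(r-1)>r-1$, using the hypothesis $r<q/2-1$ of the construction. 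This is exactly the role that bound on $r$ plays, and your argument makes it visible.

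One small remark: as you implicitly note, \eqref{ellcount} itself only holds for $\ell\notin\cL$; you might flag this explicitly, since the proposition as stated says ``arbitrary line''.
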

	\begin{proof}
		Clearly, $|\ell\cap\cP|=\sum_{i=1}^r|\ell\cap\cP_i|$. As $\ell$ meets each line of $\cL$ in a unique point, it is also clear that $\sum_{i=1}^r i|\ell\cap\cP_i|=r$. Subtracting these gives \eqref{ellcount}. Assume now $\ell\notin \{\ell_1,\ldots,\ell_s\}$; this yields $|\ell\cap X|\leq s$. If $\ell$ meets every line of $\cL$ in a point of $\cP_1$, then $|\ell\cap \cP|=\left|\ell\cap\left(\cP\setminus(\cP_2\cup H)\right)\right|=r$, hence $|\ell \cap \cS|\geq r$. Suppose now that $\ell$ contains a point $P$ from $\cP$ which is covered by $j\geq 2$ lines of $\cL$. By \eqref{ellcount}, we have $|\ell\cap\cP|\leq r-(j-1)$. By the assumptions of Construction \ref{linesconst}, $j\geq s+2$. Assume first $j\geq s+3$. Then $|\ell\cap\cS|\leq |\ell\cap\cP|+|\ell\cap X|\leq r-2$. If $j=s+2$, then $P\notin\cS$, and thus $|\ell\cap\cS|\leq |\ell\cap\cP|-1 +|\ell\cap X|\leq r-2$.
	\end{proof}
	
	Note that by Proposition \ref{linesprop}, by taking the union of a set $\cL$ of any $k+1$ lines and remove some of the points covered by three or more lines, we always obtain a $k$-avoiding set. Moreover, if there are no points covered by exactly two lines of $\cL$, we can add a suitable collinear subset of points to provide a large interval in the spectrum. Note that such a line set is the dual of a $2$-avoiding set.
	
	
	In the proof of the next theorem, we use homogeneous coordinates $(x:y:z)$ and $[m:w:b]$ to represent the points and the lines of $\PG(2,q)$, respectively, where a point $(x:y:z)$ is on the line $[m:w:b]$ if and only if $mx+wy+zb=0$. 
	
	\begin{theorem}\label{linesthm1}
		Assume that $d\mid q-1$, $2\leq d< q-1$, and let $k=3d-1$. Then there exists a $k$-avoiding set in $\PG(2,q)$ of size $m$, provided that
		\begin{compactitem}
			\item $d=2$ ($k=5$) and $m\in[6q-12,6q-8]$, or
			\item $d=3$ ($k=8$), $q\geq 11$ and $m\in [9q-27,10q-31]$, or
			\item $d\geq 4$ and $m\in[3dq-3d^2,3dq-3d^2+q-d+2]$.
		\end{compactitem}
	\end{theorem}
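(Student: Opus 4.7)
The plan is to apply Construction~\ref{linesconst} with $r = 3d = k + 1$ lines arranged as three pencils tied together by the multiplicative subgroup $D \leq \F_q^*$ of order $d$ (which exists because $d \mid q - 1$). Choosing coordinates with $P_1 = (1{:}0{:}0)$, $P_2 = (0{:}1{:}0)$, $P_3 = (0{:}0{:}1)$, let $\cL$ consist of the lines $y = \alpha z$, $x = \beta z$, $x = \gamma y$ with $\alpha, \beta, \gamma$ independently ranging over $D$. The key incidence property is that $y = \alpha z$ and $x = \beta z$ meet at $(\beta{:}\alpha{:}1)$, which lies on $x = \gamma y$ exactly when $\gamma = \beta/\alpha$; since $D$ is closed under division, $\gamma \in D$ and so every intersection of two lines from distinct pencils is in fact a triple intersection. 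Hence the cover multiplicities satisfy $\cP_2 = \emptyset$ for $d \geq 3$ (with $|\cP_2| = 3$ coming from the pencil vertices when $d = 2$), $|\cP_3| = d^2$ (the triple points), and $|\cP_d| = 3$ (the pencil vertices). A routine incidence count gives $|\cP_1| = 3dq - 3d^2$ and $|\cP| = 3dq - 2d^2 + 3$.

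For $d = 2$, apply Construction~\ref{linesconst} with $s = 0$: the set $\cS = \cP \setminus (\cP_2 \cup H)$ with $H \subseteq \cP_3$ has size $6q - 8 - |H|$, and letting $|H|$ range over $[0, 4]$ gives exactly the interval $[6q - 12,\, 6q - 8]$.

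For $d \geq 3$, apply Construction~\ref{linesconst} with $s = 1$, taking the external line $\ell_1 = P_1 P_2 = \{z = 0\}$. This line meets $\cL$ in $P_1$, $P_2$ together with the $d$ points $(\gamma{:}1{:}0)$ for $\gamma \in D$, so $|\ell_1 \cap \cP| = d + 2$ and $|\ell_1 \cap \cP_0| = q - d - 1$. Setting $\cS = (\cP \setminus (\cP_3 \cup H)) \cup X_1$ with $H \subseteq \cP_d$ and $X_1 \subseteq \ell_1 \cap \cP_0$, one obtains $|\cS| = 3dq - 3d^2 + 3 - |H| + |X_1|$. Allowing $|X_1| \in [0, q - d - 1]$ and $|H| \in [0, 3]$ (with $|H| = 0$ forced for $d = 3$, where $\cP_d = \cP_3$ has already been excised), the differences $|X_1| - |H|$ sweep all integers in $[-3, q - d - 1]$, yielding the stated ranges $[3dq - 3d^2,\, 3dq - 3d^2 + q - d + 2]$ for $d \geq 4$ and $[9q - 27,\, 10q - 31]$ for $d = 3$.

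The delicate point, which I expect to be the main obstacle, is that Proposition~\ref{linesprop} does not address $\ell = \ell_1$ itself; on this line one computes $|\ell_1 \cap \cS| = (d + 2 - |\ell_1 \cap H|) + |X_1|$, which hits the forbidden value $k = 3d - 1$ for precisely one choice of $|X_1|$ per $H$. These isolated missing sizes must be recovered by switching to the $s = 0$ variant, where $\cS = \cP \setminus H$ with $H \subseteq \cP_3 \cup \cP_d$ provides sizes in $[3dq - 3d^2,\, 3dq - 2d^2 + 3]$; this covers the offending gap provided $|X_1| - |H| \leq d^2$, and the hypothesis $q \geq 11$ for $d = 3$ ensures $|\ell_1 \cap \cP_0| = q - 4$ is large enough to reach the top of the claimed interval while still leaving the overlap needed for this repair.
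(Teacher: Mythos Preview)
Your approach is essentially the paper's: the same three-pencil configuration indexed by the multiplicative subgroup, fed into Construction~\ref{linesconst}. The incidence analysis and the $d=2$ case are fine. Two points deserve correction or comment.

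\textbf{Bookkeeping error at $d=3$.} Your size formula $|\cS|=3dq-3d^2+3-|H|+|X_1|$ is derived from $|\cP_3|=d^2$, but when $d=3$ the three pencil vertices lie in $\cP_d=\cP_3$ as well, so $|\cP_3|=d^2+3=12$. With $s=1$ and $H=\emptyset$ forced, the correct count is $|\cS|=|\cP|-|\cP_3|+|X_1|=9q-27+|X_1|$, which is what actually yields the stated range $[9q-27,10q-31]$ (your formula would give $[9q-24,10q-28]$). Likewise your $\ell_1$-intersection formula is off for $d=3$: since $P_1,P_2\in\cP_3$ are already removed, $|\ell_1\cap\cS|=d+|X_1|=3+|X_1|$, so the forbidden value is $|X_1|=5$, not $2d-3=3$.

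\textbf{Repair strategy and the role of $q\geq 11$.} Your $s=0$ fallback is a valid alternative to the paper's fix, but your explanation of the hypothesis $q\geq 11$ does not match either method. In the paper, the problematic case $|X_1|=5$ for $d=3$ is handled by switching $\ell_1$ to a line through a single vertex (a $(q-2d)$-secant to $\cP_0$), and one then needs $5\leq q-2d=q-6$, i.e.\ $q\geq 11$. Your $s=0$ variant covers the offending size $9q-22$ for every admissible $q$, so if you use that repair the hypothesis is not actually needed by your argument. For $d\geq 4$ the paper does not invoke any repair: the freedom in placing $P_3$ inside or outside $H\subset V$ already lets one avoid $|\ell_1\cap\cS|=3d-1$ for every target size, so your $s=0$ detour is correct but unnecessary there.
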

	\begin{proof}
		We apply Construction \ref{linesconst}. Let $G$ be a subgroup of order $d$ of the multiplicative group $\GF(q)^*$ of $\GF(q)$. Let $\cL_1=\{[m:-1:0]\colon m\in G\}$, $\cL_2= \{[-1:0:a]\colon a\in G\}$, $\cL_3= \{[0:-1:b]\colon b\in G\}$, and $\cL=\cL_1\cup\cL_2\cup\cL_3$. Thus $\cL$ consists of $r=3d$ lines passing through one of the points of $V=\{(0:0:1)$, $(0:1:0)$, $(1:0:0)\}$, and $V\subset \cP_d$. Note that if $1\leq i<j\leq 3$, then $\{\ell\cap\ell'\colon \ell\in\cL_i, \ell'\in\cL_j\}=\{(x:y:1)\colon x,y\in G\}$, which set we denote by $Z$. It follows that $\cP_2\setminus V=\emptyset$, $\cP_3\setminus V=Z$ (note that if $d=2$ or $3$, the three points in $V$ are somewhat special), and $\cP_i=\empty$ if $d\neq i\geq 4$. Then $|\cP|=3+3d(q-d)+d^2 = 3dq - 2d^2 + 3$. Note that $|[0:1:0]\cap \cP_0|=q-1-d$ and, if $m\notin G$, then $|[m:-1:0]\cap\cP_0|=q-2d$.
		
		If $d=2$, then $|\cP_2|=|V|=3$, $|\cP_3|=d^2$. Let $s=0$ and $H\subset\cP_3$ arbitrary. By Proposition \ref{linesprop}, in this way we can construct a $5$-avoiding set of size $m$ for any $m\in[6q-12,6q-8]$.
		
		If $d = 3$, then $\cP_2=\emptyset$ and $\cP_3=V\cup Z$. Let $s=1$, $H=\emptyset$, and let $X\subset[0:1:0]\cap \cP_0$ be arbitrary. Then $|X\cap \cS|=X+d$. By Proposition \ref{linesprop}, $|\cS|=9q-27+|X|$, and $\cS$ is $(3d-1)=8$-avoiding unless $|X|=5$. In this case, we may choose $X$ to be a subset of $\cP_0$ on a $(q-2d)=(q-6)$-secant line to $\cP_0$ (here we need $q\geq 11$).
		
		If $d\geq 4$, then $\cP_2=\emptyset$, $\cP_3=Z$, $\cP_d=V$. Let $H\subset V$ ($0\leq |H|\leq 3$) and $X\subset[0:1:0]\cap \cP_0$ ($0\leq |X|\leq q-1-d$) be arbitrary. By Proposition \ref{linesprop}, we can construct a $(3d-1)$-avoiding set of any size in $[3dq-3d^2,3dq-3d^2+q-d+2]$.
	\end{proof}
	
	Note that if $d$ is between (roughly) $\sqrt{q/3}$ and $\sqrt{2q/3}$, then the whole interval of length $q-d+3$ covered by Theorem \ref{linesthm1} is in the critical window. Let us also remark that Theorem \ref{linesthm1}, unlike our other constructions, works for prime order planes as well. 
	
	\begin{remark}
		Instead of using the whole line set $\cL_3$ in Theorem \ref{linesthm1}, if we take only a subset of size $u$ of $\cL_3$, we can obtain $(2d+u-1)$-avoiding sets of each size in $[(2d+u)q-2d^2-ud,(2d+u)q-2d^2]$. Here we assume $u,d\geq 4$; small $d$ or $u$ requires a bit of caution.
	\end{remark}

	Next we take a subplane and the dual of a $2$-avoiding set in it to obtain an interval of sizes in the spectrum. 
	
	\begin{theorem}\label{linesthm2}
		Suppose that $\Pi_p$, $p\geq 5$, is a subplane of $\Pi_q$, and let $3p+6\leq k \leq p^2+p-1$. Then there exists a $k$-avoiding set in $\Pi_q$ of size $m$ for all $m\in [(k+1)(q-p), (k+2)(q-p)]$.
	\end{theorem}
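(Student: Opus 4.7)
The plan is to apply Construction \ref{linesconst} with $s=0$ to a set $\cL$ of $r = k+1$ lines of $\Pi_p$, viewed as lines of $\Pi_q$. The key is to choose these $r$ lines so that every point of $\Pi_p$ lies on $0$ or at least $3$ of them (i.e., $\cL$ is the dual, inside $\Pi_p$, of a $\{1,2\}$-avoiding point set $\cT$ of size $r$). Since two lines of a subplane meet inside the subplane, the intersection pattern of $\cL$ in $\Pi_q$ is governed entirely by what happens in $\Pi_p$; in particular, no point of $\Pi_q$ lies on exactly $2$ lines of $\cL$, so $\cP_2 = \emptyset$ in the notation of Construction \ref{linesconst}, and Proposition \ref{linesprop} guarantees that $\cS = \cP \setminus H$ is $(r-1) = k$-avoiding for any $H \subset \bigcup_{i\geq 3}\cP_i$.

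Writing $|\cP| = r(q-p) + |\cP^{(p)}|$, where $\cP^{(p)}$ is the set of subplane points incident to some line of $\cL$, the $1$-avoiding property of $\cT$ gives $\bigcup_{i\geq 3}\cP_i = \cP^{(p)}$, so $|\cS|$ attains every value in $[r(q-p),\, r(q-p)+|\cP^{(p)}|]$. If we also arrange that $\cT$ has at most $2p+1$ skew lines, then $|\cP^{(p)}| \geq (p^2+p+1)-(2p+1) = p^2-p \geq q-p$ in the relevant setting (Baer-type subplanes), and this range covers the target interval $[(k+1)(q-p),\,(k+2)(q-p)]$.

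The main technical step is to realise such a $\cT$ for every $r \in [3p+7,\, p^2+p]$, or equivalently, a complement $\mathcal{N} = \Pi_p \setminus \cT$ of every size $n \in [1,\, p^2-2p-6]$ avoiding $(p-1)$- and $p$-secants in $\Pi_p$. I plan to construct $\mathcal{N}$ by a parameter family: take $\mathcal{N}$ to be the union of $a$ concurrent lines through a point $P \in \Pi_p$ together with an auxiliary set disjoint from them, chosen as either an arc (for $0 \leq a \leq p-4$, giving $|\mathcal{N}|$ in $[ap+1,\,(a+1)p+2]$) or a subset of one further line through $P$ (for $a \in \{p-3, p-2\}$, filling the remaining sizes up to $p^2-2p-2$). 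A routine case analysis of $|\ell \cap \mathcal{N}|$ for each configuration shows that the values $p-1$ and $p$ are avoided, and the overlap between consecutive-$a$ ranges leaves no integer in $[1,\,p^2-2p-6]$ uncovered. The principal obstacle is precisely this case analysis together with the uniform existence of the required arcs disjoint from the chosen $a$ lines; once the family of $\cT$'s is in hand, the application of Construction \ref{linesconst} and Proposition \ref{linesprop} and the size computation above complete the proof.
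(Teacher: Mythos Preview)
Your approach via $s=0$ and varying $H\subset\cP^{(p)}$ has a genuine gap: it only establishes the theorem when $\Pi_p$ is a Baer subplane. The target interval $[(k+1)(q-p),(k+2)(q-p)]$ has length $q-p$, but $|\cP^{(p)}|\leq |\Pi_p|=p^2+p+1$, so your range of achievable sizes falls short whenever $q-p>p^2+p+1$, i.e., whenever $q>(p+1)^2$. The theorem is stated for an arbitrary subplane $\Pi_p$ of $\Pi_q$, and the corollary immediately following it applies it with $q=p^3$; there your method yields an interval of length at most $p^2+p+1$ inside a target interval of length $p^3-p$, so the argument breaks down precisely in the case the paper cares about most. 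Your parenthetical ``in the relevant setting (Baer-type subplanes)'' does not save this: the statement is not restricted to that setting.

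The remedy, and what the paper actually does, is to use Construction~\ref{linesconst} with $s=1$ rather than $s=0$, adding a collinear set $X_1$ on a line $\ell$ that is a $(p+1)$-secant of $\Pi_p$ but is not a member of $\cL$. Since every line of $\cL$ is a subplane line and hence meets $\ell$ inside $\Pi_p$, all $q-p$ points of $\ell$ outside $\Pi_p$ lie in $\cP_0$; thus $|X_1|$ can take any value in $[0,q-p]$, and this directly produces the required interval regardless of how large $q$ is compared to $p$. For $s=1$ one needs $\cP_2=\emptyset$, and your $\{1,2\}$-avoiding dual set $\cT$ already guarantees that; the paper obtains the same condition more cheaply by taking the three pencils through collinear subplane points $P_0,P_1,P_2$ on $\ell$ together with $k+1-3p$ further subplane lines chosen so that each remaining $P_i\in\ell\cap\Pi_p$ is covered $0$ or at least $3$ times. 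In short, your careful construction of $\cT$ is not the problem; the problem is that deleting points from $\cP$ can move the size by at most $|\Pi_p|$, so you must \emph{add} a collinear set outside the subplane to obtain a range of length $q-p$.
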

	\begin{proof}
		We apply Construction \ref{linesconst} and Proposition \ref{linesprop} and the notation therein.
		Let $\Pi_p\simeq\PG(2,p)$ be a subplane of $\PG(2,q)$, and let $\ell$ be a line of $\PG(2,q)$ with $\ell\cap\Pi_p=\{P_0,\ldots, P_p\}$. Take all lines of $\Pi_p$ through $P_0$, $P_1$ and $P_2$ and $k+1-3p$ more lines of $\Pi_p$ different from $\ell$ so that each $P_i$, $3\leq i\leq p$, is covered by either $0$ or at least $3$ of them. (This can be done by $p\geq 5$.) This set $\cL$ of $k+1$ lines, considered as lines of $\PG(2,q)$, cover each point of $\Pi_p\setminus\ell$ at least three times, the points of $\ell$ either $0$ or at least three times, and all other points at most once. Thus $\cP_2=\emptyset$ and $|\cP_1|=|\cP\setminus \cup_{i=3}^{k+1}\cP_i|=(k+1)(q-p)$. As $|\ell\cap\cP_0|\geq q-p$, we may apply Proposition \ref{linesprop} with $s=1$, $\ell_1=\ell$ to finish the proof. 
	\end{proof}
	
	The most interesting case of the above result is when $q=p^3$ and $k\approx p^2$, as in this case the interval covered by Theorem \ref{linesthm2} is inside the critical window and its length is linear in $q$.
	
	\begin{cor}
		If $q=p^3$, $p\geq 5$ a prime power, and $k=p^2+c$ ($-2\leq c\leq p$), then 
		\[[kq-(c+1)p, (k+1)q-(c+2)p]\subset\Sp(k,q).\]
	\end{cor}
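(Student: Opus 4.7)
The plan is to deduce the corollary as a direct specialisation of Theorem \ref{linesthm2} with $q=p^3$. Since $p$ is a prime power, $\PG(2,p)$ embeds as a subplane of $\PG(2,p^3)=\PG(2,q)$ via the field extension $\GF(p)\subset\GF(p^3)$, so the geometric hypothesis of Theorem \ref{linesthm2} is fulfilled.

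Next, I would verify that $k=p^2+c$ with $-2\leq c\leq p-1$ lies in the admissible range $3p+6\leq k\leq p^2+p-1$ of Theorem \ref{linesthm2}. The upper bound reads $c\leq p-1$, matching the assumption (and suggesting that the stated range $c\leq p$ in the corollary should be read as $c\leq p-1$ in order to be consistent with Theorem \ref{linesthm2}). The lower bound $c\geq 3p+6-p^2$ is automatic for $p\geq 5$ and $c\geq -2$, since $3p+6-p^2\leq -4$ throughout this range.

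Finally, it remains to rewrite the interval $[(k+1)(q-p),(k+2)(q-p)]$ supplied by Theorem \ref{linesthm2} in the form stated in the corollary. Substituting $q=p^3$ and $k=p^2+c$, a short calculation yields
\[
(k+1)(q-p)=kq+q-(k+1)p=kq+p^3-(p^2+c+1)p=kq-(c+1)p,
\]
and analogously $(k+2)(q-p)=(k+1)q-(c+2)p$. Hence the interval produced by Theorem \ref{linesthm2} is precisely $[kq-(c+1)p,(k+1)q-(c+2)p]$, as required. No serious obstacle is expected here; the corollary amounts to verifying the range of $k$ and an identification of intervals.
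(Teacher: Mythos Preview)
Your proposal is correct and matches the paper's intent: the corollary is stated in the paper without a separate proof, as an immediate specialisation of Theorem \ref{linesthm2} with $q=p^3$, and your verification of the subplane hypothesis, the admissible range for $k$, and the interval arithmetic is exactly the required check. Your observation that the upper bound $c\leq p$ should read $c\leq p-1$ to be consistent with the hypothesis $k\leq p^2+p-1$ of Theorem \ref{linesthm2} is also correct; this appears to be a minor slip in the paper's statement.
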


	\section{Concluding remarks and open questions.}
	
	In this section we pose some open problems and point out some further related results and questions. Our main conjecture is 
	
	\begin{conj}\label{mainconj} Suppose that $ c\sqrt{q}<k<q+1-c\sqrt{q}$ for some absolute constant $c$. Then $ \Sp(k,q)=[0,q^2+q+1]$, i.e., there exists a $k$-avoiding set in $\PG(2,q)$ of every admissible size.
	\end{conj}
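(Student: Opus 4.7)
The plan is to reduce the conjecture to filling the critical window $[(k-1)q+2, (k+1)q-1]$ and then to combine structural constructions with local perturbations. By Proposition~\ref{trivikonst}, for $k \geq 3$ the spectrum already contains $[0,(k-1)q+1] \cup [(k+1)q, q^2+q+1]$, so only the critical window remains, and by the complement observation we may assume $k \leq (q+1)/2$ throughout.

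For $q$ a square, Theorem~\ref{BaerCor} already settles the conjecture, via unions of disjoint Baer subplanes together with a partial line through an external point (Construction~\ref{Baer1const}); the length $\Theta(k\sqrt{q})$ of the resulting interval of sizes is precisely what makes the argument work as soon as $k \geq c\sqrt{q}$. The first step of my plan would be to extend this scheme to every prime power $q = p^h$ with $h \geq 2$. The role of the Baer subplane would be played by a small minimal blocking set: by Sz\H{o}nyi's $1\pmod p$ theorem, such a set has lacunary type contained in $\{1+ip : i \geq 0\}$, which should be sparse enough to control the line intersections of carefully chosen unions and add-a-partial-line modifications. An analog of Lemma~\ref{lemmaBaer} should then yield a dense family of sizes covering the whole critical window whenever $k$ is comparable to $\sqrt{q}$.

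For $q$ prime, no small blocking sets other than lines are available, and a purely structural approach seems out of reach. Here the most promising route is a probabilistic-plus-correction argument: start with a set of the shape $\bigcup_{i=1}^{k-1}(\ell_i\setminus\{P\}) \cup X$ as in Construction~\ref{triviconst}, with $X$ drawn at random from the appropriate region of the complement; a first- and second-moment calculation should show that with positive probability the number of accidental $k$-secants is bounded by an absolute constant. One then eliminates each remaining $k$-secant by a bounded sequence of local swaps, each chosen so as not to create new ones, which reduces to showing that near every $k$-secant there are many admissible swap partners.

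The main obstacle, and the reason for the precise $c\sqrt{q}$ threshold in the conjecture, is exactly the prime-$q$ case. All known constructions for the critical window rest on substructures whose atomic step in the size parameter is $\Theta(\sqrt{q})$, so covering the full window of length $2q$ requires iterating $\Theta(\sqrt{q})$ times, which is only feasible when $k \geq c\sqrt{q}$. Pushing the threshold all the way down to a constant likely demands either a genuinely new algebraic construction tailored to prime fields (perhaps via algebraic curves of controlled degree and character-sum estimates for their line incidences), or an absorption-type probabilistic argument that handles all target sizes uniformly; both go substantially beyond the techniques displayed in the present paper, which is why I would expect the full conjecture to be considerably harder than the square-$q$ case already settled.
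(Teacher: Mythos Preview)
The statement you are addressing is a \emph{conjecture} in the paper, not a theorem; the paper offers no proof and explicitly presents it as open, remarking only that the square-$q$ case follows from Theorem~\ref{BaerCor} and that Corollary~\ref{2maxarccor} gives near-complete information for even $q$ in a restricted range of $k$. There is therefore no paper proof to compare against, and your text is not a proof either but a research programme whose two central steps are speculative.

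The most concrete gap is in your plan for non-square $q=p^h$ with $h\geq 2$. The Baer construction works not because a single Baer subplane has lacunary type, but because $\PG(2,q)$ admits a \emph{partition} into $q-\sqrt{q}+1$ pairwise disjoint Baer subplanes, so the union of $s$ of them is a set of type $(s,s+\sqrt{q})$ and Lemma~\ref{lemmaBaer} goes through. For general small minimal blocking sets no analogous disjointness is available; two such sets typically share many points, the type of their union is uncontrolled, and the analogue of Proposition~\ref{Baer1prop} fails outright. Sz\H{o}nyi's $1\pmod p$ result constrains each individual blocking set, not a family, so invoking it does not rescue the argument.

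For prime $q$ your probabilistic-plus-swap sketch is essentially a statement of hope. A random collinear set $X$ added to a pencil as in Construction~\ref{triviconst} affects each line not through $P$ in at most one point, so the number of accidental $k$-secants is a sum of highly dependent indicators over $\Theta(q^2)$ lines; a first/second moment bound of $O(1)$ does not follow without a substantive argument. The correction step is worse: a single point swap alters $q+1$ line intersections simultaneously, and you give no mechanism to guarantee that fixing one $k$-secant does not create another. As you yourself acknowledge in the last paragraph, these obstacles are exactly why the conjecture remains open; the proposal identifies reasonable directions but does not close any of them.
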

	Note that Theorem \ref{baeres} shows that this holds if $q$ is a square. Moreover  Corollary \ref{2maxarccor} shows that the spectrum $\Sp(k,q)$ consists of all but at most $6$  admissible values  if $q\geq 32$ is even, provided that $7\leq k\leq q/16+6$.  \\
	In fact, we believe that Conjecture
	\ref{mainconj} might  be true  for all integer values of $k\in [c\log(q)), q+1-c\log(q)]$. Recall that some lower bounds on $k$ are necessary to have a complete spectrum, in view of results on untouchable sets and our exhaustive search data. 
	
	It is also natural to require an avoiding property for more than one value of $k$. We may, for example, require that $S$ is $k$-avoiding for each $k$ in a large interval $[a,b]$. A related problem, albeit in dimension $3$, is the so-called cylinder conjecture of Ball \cite{cilinder},  which is also related to the direction problem. Also, when proving stability results with the subresultant method of Sz\H{o}nyi and Weiner, it is often a crucial step to derive that a set $\cL$ of lines covers each point either less then $a$ or more than $b$ times; in other words, $\cL$ is a (dual) $[a,b]$-avoiding set \cite{SzonyiWeiner, Szonyi, Weiner}. 
	
	One may also consider the modular version of the problem. Sets of even type (that is, $0 \mod 2$ sets) have gained significant attention, which notion can be generalised to $k \mod p$ multisets \cite{Szonyi}. 
	
	To obtain sets of even type or sets of odd type of various sizes, we propose the construction below.
	
	\begin{const}
		Let $\PG(2,q)$ be of even order. Consider the set $\cH$ of hyperovals and take the symmetric difference of every element of each subset of $\cH$. This corresponds to the linear code generated by the characteristic vectors of the hyperovals. It is obvious that the obtained  point sets are sets of even type with even cardinality. 
		Take also the complement of these point sets, which will  be sets of odd type, and with odd cardinality this time.
	\end{const}
	
	We conjecture that the obtained set system shows that the modular version of Conjecture \ref{mainconj} holds in a stronger from: if $q$ is even, $k$ not too small or large, and $k\not\equiv m \pmod{2}$, then there are $k$-avoiding sets of size $m$ for every possible $m$ with all secant length different from $k\pmod{2}$. It would be interesting the describe the weight enumerator of the corresponding code and it would also be interesting to see whether there exist sets of even type which cannot be obtained in this manner \cite{Szonyi_personal}, see also \cite{Zullo}.
	
	We mention that concerning the case when $q$ is odd,  Ballister, Bollobás and  Füredi \cite{Furedi} studied the related problem of determining the
	minimum size of the symmetric differences of lines
	in projective planes, while 
	Ball and Csajbók studied sets with few odd secants \cite{Csajbok}.

	For untouchable sets or sets without a tangent (case $k=1$), computer search search suggested that on the top of the critical window, $m=2q-1$ might always be missing from $\Sp(1,q)$ if $q$ is odd. (Note that for $q=8$ and $16$, we have such an object.)
	
	The case $k=2$ is also particularly interesting. First, the possible sizes in the critical window seems hard to determine, see Table \ref{tab:smallq}. Second, if we have a $2$-avoiding set of size $k+1$ then, using their duals as seen after Proposition \ref{linesprop}, we obtain $k$-avoiding sets of size in the critical window. Here the difficulty is to find a $2$-avoiding set of size $k+1$ which has about $(k-1)q$ tangents. 
	
	Finally, we reiterate here  Problem \ref{fstaravoiding} on the existence of point sets where there is a given number $M(q)$ of integers prescribed for each line which describes the forbidden secant size of the corresponding line.

\end{document}